\newtheorem{mydef}{Definition}[section]
\newtheorem{myrem}{Remark}[section]
\newtheorem{mytheo}{Theorem}[section]
\newtheorem{mylem}{Lemma}[section]
\newtheorem*{mycorollary1.2}{Corollary 1.2*}
\theoremstyle{remark}
\numberwithin{equation}{section}
\begin{document}

\title{Quantitative Estimates of The Singular Values of Random I.I.D. Matrices}

\pagestyle{fancy}

\fancyhf {} 


\fancyhead[CO]{\footnotesize  Deviation Inequalities for the singular values}

\fancyhead [CE]{\footnotesize G. DAI, Z. SU, AND H. WANG }

\fancyhead [LE]{\thepage}
\fancyhead [RO]{\thepage}
\renewcommand{\headrulewidth}{0mm}

\author{Guozheng Dai}
\address{Zhejiang University, Hangzhou, 310058,  China.}
\email{guozhengdai1997@gmail.com}

\author{Zhonggen Su}
\address{Zhejiang University, Hangzhou, 310058,  China.}
\email{suzhonggen@zju.edu.cn}

\author{Hanchao Wang }
\address{Shandong University,  Jinan,  250100, China.}
\email{hcwang06@gmail.com}
\subjclass[2020]{60B20, 41A46, 46B20}

\date{}

\keywords{}

\begin{abstract}	
	Let $M$ be an $n\times n$ random i.i.d. matrix. This paper studies the deviation inequality of $s_{n-k+1}(M)$, the $k$-th smallest singular value of $M$. In particular, when the entries of $M$ are subgaussian, we show that for any $\gamma\in (0, 1/2), \varepsilon>0$ and $\log n\le k\le c\sqrt{n}$
	\begin{align}
		\textsf{P}\{s_{n-k+1}(M)\le \frac{\varepsilon}{\sqrt{n}}  \}\le \Big( \frac{C\varepsilon}{k}\Big)^{\gamma k^{2}}+e^{-c_{1}kn}.\nonumber
	\end{align}
    This result improves an existing result of Nguyen, which obtained a deviation inequality of $s_{n-k+1}(M)$ with $(C\varepsilon/k)^{\gamma k^{2}}+e^{-cn}$ decay.	
\end{abstract}

\maketitle



\section{Introduction and main results}

Let $M=(\xi_{ij})_{n\times n}$ be a random matrix, where $\xi_{ij}$ are independent copies of a random variable $\xi$ in $\mathbb{R}$. The singular values $s_{k}(M)$ of $M$ are the eigenvalues of $\sqrt{M^{\top}M}$ arranged in the non-increasing order. By the Courant–Fischer–Weyl min-max principle, $s_{k}(M)$ can be formulated as follows:
\begin{align}\label{Eq_intro_def_singularvalues}
	s_{k}(M)=\min_{\text{dim}(H)=n-k+1}\max_{x\in H\cap S^{n-1}}\Vert Mx\Vert_{2},
\end{align}
where $H$ is a linear subspace of $\mathbb{R}^{n}$ with dimension $n-k+1$ and $S^{n-1}$ is the unit sphere.

The condition number of $M$, i.e., the ratio of $s_{1}(M)$ and $s_{n}(M)$, is used in numerical analysis as a measure of sensitivity to round-off errors. Bounding the condition number of $M$ is an important problem in this field. The  behavior of $s_{1}(M)$ is well understood under general assumptions on $\xi$. In particular, Yin, Bai and Krishnaiah \cite{Yin_PTRF} proved that, when $\textsf{E}\xi=0$ and $\textsf{E}\vert\xi\vert^{4}<\infty$, with high probability
\begin{align}
	s_{1}(M)\sim \sqrt{n}.\nonumber
\end{align}
Hence, the main problem in estimating the magnitude of the condition number is to explore the lower bound of $s_{n}(M)$.

In the case where $\xi$ is a standard gaussian variable, i.e., $M$ is a Ginibre matrix, Edelman \cite{Edelman_SIAM_JMAA} computed the distribution of $s_{n}(M)$. He showed for every $\varepsilon\ge 0$
\begin{align}
	\textsf{P}\{s_{n}(M)\le \varepsilon n^{-1/2}  \}\sim \varepsilon.\nonumber
\end{align}

Motivated by the universality phenomenon in the random matrix theory, we naturally expect similar probability inequalities  for $s_{n}(M)$ in general random matrix ensembles. Spielman and Teng \cite{Spielman_ICM} conjectured that when $\xi$ is a Rademacher variable, i.e., $\textsf{P}\{\xi=1 \}=\textsf{P}\{\xi=-1 \}=1/2$, for $\varepsilon\ge 0$
\begin{align}
	\textsf{P}\{s_{n}(M)\le \varepsilon n^{-1/2}  \}\le \varepsilon+e^{-cn},\nonumber
\end{align}
where $c$ is an absolute positive constant. This conjecture, put forward in the 2002 ICM, has stimulated a great deal of work on the least singular values of random i.i.d. matrices in the past 20 years. Compared with the gaussian case, the Spielman-Teng's conjecture has an extra exponential decay term. We note that the exponential addition is necessary for the deviation inequalities of $s_{n}(M)$ in this setting. Indeed, Kahn et al. \cite{Kahn_Jams} showed that
\begin{align}
	\textsf{P}\{s_{n}(M)=0  \}\le \big(0.998+o(1)  \big)^{n}.\nonumber
\end{align}
Recently, Tikhomirov \cite{Tikhomirov_AOM} obtained the asymptotically optimal exponent:
\begin{align}
	\textsf{P}\{s_{n}(M)=0  \}= \big(\frac{1}{2}+o(1)  \big)^{n}.\nonumber
\end{align}

Most notably Rudelson and Vershynin \cite{Rudelson_Advance} proved Spielman-Teng's conjecture up to a constant. In particular, they assumed $\xi$ is a  subgaussian variable with mean $0$, variance $1$, and showed for $\varepsilon\ge 0$
\begin{align}
	\textsf{P}\{s_{n}(M)\le \varepsilon n^{-1/2}  \}\le C\varepsilon+e^{-cn},\nonumber
\end{align}
where $C$ is an absolute positive constant depending only on the subgaussian moment of $\xi$. Here, we say $\xi$ is subgaussian if satisfying for $t\ge 0$
\begin{align}\label{Eq_Intro_Rudelson_Vershynin_result}
	\textsf{P}\{\vert\xi\vert>t  \}\le 2\exp(-t^{2}/K^{2}),
\end{align}
and  call the smallest of such $K$ the subgaussian moment of $\xi$. Recently, Rebrova, Tikhomirov \cite{Rebrova_IJM} and Livshyts \cite{Livshys_JAM} extended \eqref{Eq_Intro_Rudelson_Vershynin_result} to the case where $\xi$ is a heavy-tailed random variable. We refer interested readers to the corresponding work, and their results will not be detailed here.

Another famous result \cite{Tao_GAFA} was proved by Tao and Vu in the setting where $\xi$ is a  standard variable and satisfies $\textsf{E}\vert\xi\vert^C<\infty$ for some sufficiently large absolute constant $C$. They showed for all $\varepsilon\ge 0$
\begin{align}\label{Intro_Tao_universal}
		\textsf{P}\{s_{n}(M)\le \varepsilon n^{-1/2}  \}= \varepsilon+O(n^{-c}),
\end{align}
where $c>0$ is a absolute constant. Although \eqref{Intro_Tao_universal} captures the optimal constant $1$ before $\varepsilon$, it has a polynomial decay term which shall lead to a bad bound for the probability of the event $\{s_{n}(M)=0\}$.

Very recently, Sah, Sahasrabudhe, and Sawhney \cite{Sah_arXiv_conjecture} proved the Spielman-Teng conjecture up to a $1+o(1)$ factor. In particular, they showed that, when $\xi$ is a standard subgaussian variable,  for all $\varepsilon\ge 0$
\begin{align}
	\textsf{P}\{s_{n}(M)\le \varepsilon n^{-1/2}  \}= \big(1+C\log^{-1/16}n  \big)\varepsilon+e^{-cn},\nonumber
\end{align}
where $C, c$ are absolute constants depending only on the subgaussian moment of $\xi$. 

As we have seen above, our understanding of the lower bound of $s_{n}(A)$ is almost complete. So it is natural to ask how other singular values $s_{k}(M)$ behave. In the Ginibre ensemble, a celebrated result due to Szarek \cite{Szarek_JC} yields for all $\varepsilon\ge 0$ and $1\le k\le n$
\begin{align}
	\big(\frac{C_{1}\varepsilon}{k}\big)^{k^2}\le \textsf{P}\{s_{n-k+1}(M)\le \frac{\varepsilon}{\sqrt{n}}  \}\le \big(\frac{C_{2}\varepsilon}{k}  \big)^{k^2},\nonumber
\end{align}
where $C_{1}, C_{2}$ are absolute positive constants. An extended result was given by Nguyen \cite{Nguyen_JFA}. He considered the standard subgaussian case and proved that, for any $0<\gamma<1$, there exist positive constants $C, c$ and $\gamma_{0}$ (depending only on the subgaussian moment and $\gamma$) such that for $\gamma_{0}^{-1}<k<\gamma_{0}n$ and $\varepsilon\ge 0$
\begin{align}\label{Intro_Nguyen_result}
	\textsf{P}\{s_{n-k+1}(M)\le \frac{\varepsilon}{\sqrt{n}}  \}\le \big( \frac{C\varepsilon}{k}  \big)^{\gamma k^{2}}+e^{-cn}.
\end{align}


In the setting of \eqref{Intro_Nguyen_result}, i.e., $\xi$ is a standard subgaussian variable, we have by taking $\varepsilon=0$
\begin{align}
	\textsf{P}\{s_{n-k+1}(M)=0  \}\le e^{-cn}.\nonumber
\end{align}
Recently, Rudelson \cite{Rudelson_Annals} considered the probability that $M$ has a large co-rank. He showed that, when $\xi$ is a non-constant subgaussian variable, for $k\le c_{1}\sqrt{n}$
\begin{align}\label{Intro_Rudelson_rank_result}
	\textsf{P}\{ \text{rank}(M)\le n-k \}\le e^{-c_{2}kn}.
\end{align}
Note that 
\begin{align}
	\text{rank}(M)=\text{rank}(M^\top M)=\min\{k: s_{k}(M)>0, 1\le k\le n \}.\nonumber
\end{align}
Hence, \eqref{Intro_Rudelson_rank_result} yields for $k\le c_{1}\sqrt{n}$
\begin{align}
	\textsf{P}\{s_{n-k+1}(M)=0  \}\le e^{-c_{2}kn}.\nonumber
\end{align}
Due to this observation, one would expect the term $e^{-cn}$ in \eqref{Intro_Nguyen_result} can be replaced by $e^{-ckn}$. Our main result shows that this is indeed possible.
\begin{mytheo}\label{Theo_Main2}
	Let $M=(\xi_{ij})_{n\times n}$ be a random  matrix and $\xi_{ij}$ be independent copies of a random variable $\xi$ in $\mathbb{R}$. Assume $\xi$ is a centered non-constant subgaussian variable. For any fixed $\gamma\in (0, 1/2)$, we have for $\varepsilon>0$ and $\log n\le k\le c\sqrt{n}$
	\begin{align}
		\textsf{P}\{s_{n-k+1}(M)\le \frac{\varepsilon}{\sqrt{n}}  \}\le \Big( \frac{C\varepsilon}{k}\Big)^{\gamma k^{2}}+e^{-c_{1}kn},\nonumber
	\end{align}
where $C, c, c_{1}$ are positive constants depending only on $\gamma$ and the subgaussian moment of $\xi$.
\end{mytheo}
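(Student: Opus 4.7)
The plan is to follow Nguyen's framework \cite{Nguyen_JFA}, which already delivers the polynomial factor $(C\varepsilon/k)^{\gamma k^{2}}$, but to replace the single exponential tail $e^{-cn}$ coming from the \emph{compressible} part of his argument by the sharper $e^{-c_{1}kn}$ tail produced by the quantitative corank estimate of Rudelson~\eqref{Intro_Rudelson_rank_result}. So the polynomial factor is kept verbatim from Nguyen, and the main work is to rerun a compressible/incompressible dichotomy at the level of $k$-dimensional subspaces and to upgrade the pointwise tail in the compressible case from $e^{-cn}$ to $e^{-c_{1}kn}$.

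More precisely, with parameters $\delta,\rho\in(0,1)$ to be tuned later, call $x\in S^{n-1}$ \emph{compressible} if $x$ is within $\ell^{2}$-distance $\rho$ of some $\lceil\delta n\rceil$-sparse vector, and call a $k$-dimensional subspace $E\subset\mathbb{R}^{n}$ \emph{structured} if every unit vector of $E$ is compressible, and \emph{unstructured} otherwise. On $\{s_{n-k+1}(M)\le \varepsilon/\sqrt n\}$ the minimax formula \eqref{Eq_intro_def_singularvalues} supplies some such $E$ with $\|Mx\|_{2}\le \varepsilon/\sqrt n$ on $E\cap S^{n-1}$, and this $E$ is either structured or contains an incompressible unit vector $y$. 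In the unstructured case I apply Nguyen's argument to $y$: the essential-LCD small-ball bound combined with tensorization and an $\varepsilon$-net discretization of the Grassmannian encoding the ambient $E$ produces the bound $(C\varepsilon/k)^{\gamma k^{2}}$, the slack $1/2-\gamma$ absorbing the net entropy.

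In the structured case, following the strategy of Rudelson \cite{Rudelson_Annals}, I write, for a fixed structured $E$ with orthonormal basis $x_{1},\dots,x_{k}$ and rows $M_{1},\dots,M_{n}$ of $M$,
\begin{align*}
\|MP_{E}\|_{\mathrm{HS}}^{2}=\sum_{i=1}^{n}\|P_{E}M_{i}^{\top}\|_{2}^{2},
\end{align*}
which is a sum of $n$ i.i.d.\ nonnegative subexponential random variables each of mean $k$. A Chernoff-type small-ball estimate for this sum gives $\textsf{P}\{\|MP_{E}\|_{\mathrm{HS}}\le \sqrt{k}\,\varepsilon/\sqrt n\}\le e^{-c'kn}$ once $\varepsilon$ is bounded away from $1$, and the covering number of the set of structured $k$-dimensional subspaces at a suitable scale is at most $\exp(Ck\delta n\log(1/(\delta\rho)))$, which is beaten by $e^{c'kn}/2$ once $\delta,\rho$ are taken small enough. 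A union bound then closes the structured case with the desired $e^{-c_{1}kn}$ tail.

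The main obstacle is the joint tuning of $\delta,\rho$: they must be small enough that the covering entropy of structured $k$-subspaces is dominated by $e^{-c'kn}$, yet large enough that in the unstructured case the incompressible vector $y$ still has the regularity needed for the essential-LCD machinery to deliver $(C\varepsilon/k)^{\gamma k^{2}}$ with $\gamma$ arbitrarily close to $1/2$. The hypotheses $\log n\le k\le c\sqrt n$ enter precisely here: the lower bound $k\ge\log n$ is needed to ensure that the $e^{-c_{1}kn}$ factor uniformly dominates the entropy of the net used in the structured case, while the upper bound $k\le c\sqrt n$ is inherited from the regime of Rudelson's rank estimate~\eqref{Intro_Rudelson_rank_result}.
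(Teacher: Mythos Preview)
Your dichotomy is placed on the wrong subspace, and this makes the unstructured branch collapse. In Nguyen's argument the exponent $\gamma k^{2}$ does \emph{not} come from a single incompressible vector $y$ in the near-null space $E$; it comes from the restricted-invertibility reduction (Lemma~\ref{Lem_Naor_restricted_invertibility}) which converts the event $\{s_{n-k+1}(M)\le\varepsilon/\sqrt n\}$ into a bound on $\sum_{m\le l}\mathrm{dist}(\boldsymbol{c}_{i_m},H)^{2}$, where $H$ is the span of the remaining $n-l$ columns. The $e^{-cn}$ term you want to upgrade is precisely the probability that $D^{(1)}(H^{\perp})$ is small (Lemma~\ref{Lem_prob_LCD_1}); it has nothing to do with whether $E$ contains an incompressible vector. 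So even if $E$ is ``unstructured'' in your sense, running Nguyen's machinery still forces you through the event $\{D^{(1)}(H^{\perp})\text{ small}\}$, and you are back to $e^{-cn}$. Moreover, the sentence ``an $\varepsilon$-net discretization of the Grassmannian encoding the ambient $E$ \dots\ the slack $1/2-\gamma$ absorbing the net entropy'' cannot work as stated: the Grassmannian of $k$-planes has metric entropy of order $k(n-k)\log(1/\varepsilon)$, while the polynomial factor $(C\varepsilon/k)^{\gamma k^{2}}$ can absorb only $O(k^{2})$ entropy, and $k^{2}\ll kn$ in the regime $k\le c\sqrt n$.

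What the paper actually does is keep Nguyen's reduction to column distances intact and instead run the compressible/incompressible dichotomy \emph{inside $H^{\perp}$}, using Rudelson's set-partition lemma (Lemma~\ref{Lem_set_partition}) twice. The first application either produces $l/4$ almost-orthogonal compressible vectors in $H^{\perp}$ (probability $\le e^{-cln}$ by Lemma~\ref{Lem_Rudelson_subgaussian_compressible}) or a subspace $F\subset H^{\perp}$ avoiding compressible vectors; the second application to $F$ either produces an almost-orthogonal system handled by Lemma~\ref{Lem_Rudelson_subgaussian_incompressible} (again $\le e^{-cln}$) or a further subspace $\tilde F\subset F$ of dimension $l/2$ with $D^{(2)}(\tilde F)\ge\exp(c\rho^{2}n/l)$. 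On this good event one projects onto $\tilde F$ rather than all of $H^{\perp}$, applies Lemma~\ref{Lem_Rudelson_LCD_distance}, and tensorizes over the $l$ independent columns. The point is that the bad event is now the existence of an $l/4$-tuple of almost-orthogonal structured vectors in the kernel of an $(n-l)\times n$ submatrix, and Rudelson's Lemmas~\ref{Lem_Rudelson_subgaussian_compressible}--\ref{Lem_Rudelson_subgaussian_incompressible} give this probability $e^{-cln}=e^{-c'kn}$ after choosing $l=\sqrt{2\gamma}\,k$. Your structured-case idea is morally close to Lemma~\ref{Lem_Rudelson_subgaussian_compressible}, but it has to be applied to $H^{\perp}$, not to $E$.
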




\textbf{Outline of the proof :}  Inspired by the work \cite{Nguyen_JFA} of Nguyen, we first translate the estimate of $s_{n-k+1}(M)$ to bounding the distance between the column of $M$ and an independent random subspace. In particular, consider a random vector $X\in\mathbb{R}^{n}$ with i.i.d. centered non-constant subgaussian coordinates. Let $X_{1}, \cdots, X_{n}\stackrel{i.i.d.}{\sim}X$  and  $H$ be a random space spanned by $X_{k+1}, \cdots, X_{n}$. We can translate the problem in Theorem \ref{Theo_Main2} to obtain a super-exponential probability bound of $\sum_{i\le k}\textnormal{dist}(X_{i}, H)$. 
In this step, the restricted invertibility (see Lemma \ref{Lem_Naor_restricted_invertibility} below) of a full rank matrix plays an important role.


The above distance estimate can be treated as the Littlewood-Offord problem, which has been well explored in \cite{Rudelson_Advance,Rudelson_CPAM}. In particular,  one can take a realization $\omega$ of $\Omega_{1}:=\{D^{(1)}_{c_{1}\sqrt{n}, c_{2}}(H^\perp)>C\sqrt{n}e^{\frac{c_{3}n}{k}}  \}$, where $D^{(1)}_{c_{1}\sqrt{n}, c_{2}}(H^\perp)$ is the LCD of $H^\perp$ (see Definition \ref{Def_lcd1} below). Then, the random variables $\textnormal{dist}(X_{i}, H(\omega))$ are independent and one can estimate $\textnormal{dist}(X_{i}, H(\omega))$ based on the existing small ball probability results. Due to the independence of $\textnormal{dist}(X_{i}, H(\omega))$,  a tensorization argument  yields a bound of $\sum_{i\le k}\textnormal{dist}(X_{i}, H(\omega))$. This is what Nguyen \cite{Nguyen_JFA} did when proving \eqref{Intro_Nguyen_result}. Note that the probability bound of $\Omega_{1}^{c}$ is $e^{-c_{4}n}$ (see Lemma \ref{Lem_prob_LCD_1} below), which is enough for \eqref{Intro_Nguyen_result} but  too large for the super-exponential decay in Theorem \ref{Theo_Main2}. Therefore, this method is not suitable for our result.

Inspired by Rudelson's recent work \cite{Rudelson_Annals}, we select a bigger event $\Omega_{2}$ such that the tail probability of $\Omega_{2}^{c}$  is $e^{-c_{5}kn}$.   Condition on the event $\Omega_{2}$, we have $D^{(2)}_{c_{1}\sqrt{n}, c_{2}}(\tilde{F})>C\sqrt{n}e^{\frac{c_{3}n}{k}}$, where $\tilde{F}$ is a subspace of $H^\perp$. Then one can finish the proof based on the existing small ball probability results and the tensorization. The most technical part of our argument is how to select the event $\Omega_{2}$. To do this, we start with a large event $\Omega_{3}$, and then use Lemma \ref{Lem_set_partition} to remove some subsets of $\Omega_{3}$. Lemmas \ref{Lem_Rudelson_subgaussian_compressible} and \ref{Lem_Rudelson_subgaussian_incompressible} ensure that the removed events have a super-exponential decay. We also remark that Lemmas \ref{Lem_Rudelson_subgaussian_compressible} and \ref{Lem_Rudelson_subgaussian_incompressible} are important for us and their proofs are based on a technique named random rounding, which is widely used in computer science and introduced in random matrix theory by Livshyts \cite{Livshys_JAM} (see \cite{Rudelson_Annals} for details).

\textbf{Organization of the rest paper :} Section 2 will introduce some notations and auxiliary lemmas. In Section 3, we shall prove Theorem \ref{Theo_Main2}. Section 4 consider the singular value when the entries of random matrices are not subgaussian. In the Appendix, we shall explain why random variables with finite second moments have bounded L\'{e}vy concentration functions, which is not a trivial conclusion.

\section{Notations and preliminary results.}

\subsection{Notations.}
We denote by $[n]$ the set of natural numbers from $1$ to $n$. For a fixed vector $x\in\mathbb{R}^{n}$, we denote by $\Vert x\Vert_{2}$ its Euclidean norm, i.e., $\Vert x\Vert_{2}=\sqrt{\sum_{i\in [n]}x_{i}^{2}}$. The unit sphere of the metric space $(\mathbb{R}^{n}, \Vert\cdot\Vert_{2})$ is denoted by $S^{n-1}$.

For a given matrix $A=(a_{ij})_{m\times n}$. We denote by the $\text{Row}_{i}(A)$ its $i$-th row and by $\text{Col}_{j}(A)$ its $j$-th column. For a fixed set $J\subset [n]$, $A_{J}$ denotes the matrix whose column vectors are $\text{Col}_{j}(A), j\in J$. Let $\Vert A\Vert_{F}$ be the Frobenius norm of the matrix $A$. In particular, $\Vert A\Vert_{F}=\sqrt{\sum a_{ij}^{2}}$. We also denote by $A^\top$ the transpose of $A$ and write $\text{Ker}(A)$ as the kernel of $A$, a subset of $\mathbb{R}^{n}$ containing vectors such that $\Vert Ax\Vert_{2}=0$.

Let $x\in\mathbb{R}^{n}$ and $\delta\in(0, 1)$.
We say $x$ is $\delta$-sparse if $\vert\text{supp}(x)\vert\le \delta n$ and denote by $\text{Sparse}(\delta n)$ the set of $\delta$-sparse vectors. Based on the distance between $\text{Sparse}(\delta n)$ and the unit vectors, we can decompose $S^{n-1}$ into two parts: compressible vectors and incompressible vectors. Their sets are written as follows:
\begin{itemize}
	\item $\text{Comp}(\delta, \rho)=\{x\in S^{n-1}:  \text{dist}(x, \text{Sparse}(\delta n))\le \rho \}$,
	\item $\text{Incomp}(\delta, \rho)=S^{n-1}\backslash\text{Comp}(\delta, \rho)$.
\end{itemize}

Throughout the paper, we denote by $C, C_{1}, c, c_{1},\cdots$ absolute constants. They are independent of any parameters unless otherwise stated and  their values may change from line to line.

\subsection{The Restricted invertibility phenomenon.}

In this subsection, we introduce a powerful result of non-random matrices. This result allows one to obtain a well-conditioned minor from a full rank rectangular matrix, which is called the restricted invertibility phenomenon.

\begin{mylem}[Theorem 6 in \cite{Naor_JTDM}]\label{Lem_Naor_restricted_invertibility}
	Assume that $A$ is a full rank matrix of size $k\times d$ with $k\le d$. Then for $1\le l\le k-1$, there exist $l$ different indices $i_{1}, \cdots, i_{l}$ such that the matrix $A_{\{i_{1},\cdots, i_{l} \}}$ with columns $\textnormal{Col}_{i_{1}}(A), \cdots, \textnormal{Col}_{i_{l}}(A)$ has the  non-zero smallest singular value $s_{l}(A_{\{i_{1},\cdots, i_{l}  \}})$ satisfying
	\begin{align}
		s_{l}(A_{\{i_{1},\cdots, i_{l}  \}})^{-1}\le C\min_{r\in\{l+1,\cdots, k \}}\sqrt{\frac{rd}{(r-l)\sum_{i=r}^{k}s_{i}^{2}(A)}}.\nonumber
	\end{align}
\end{mylem}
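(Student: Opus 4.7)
The lemma is a sharpened Bourgain--Tzafriri type restricted invertibility statement in which an interpolation over the tail of the singular spectrum of $A$ replaces the usual single-scale Frobenius-norm bound. My plan has three parts: (i) for each admissible $r$, reduce to a single-scale column-selection problem on a projected matrix $B = P_r A$; (ii) solve that problem by a Spielman--Srivastava barrier iteration; (iii) optimize over $r$.

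\textbf{Step 1 (projection onto the tail).} Fix $r \in \{l+1, \ldots, k\}$. Let $P_r$ be the orthogonal projection of $\mathbb{R}^k$ onto the span of the left singular vectors of $A$ corresponding to the $k-r+1$ smallest singular values $s_r(A) \ge \cdots \ge s_k(A)$, and set $B := P_r A$. Then $B$ is a $k \times d$ matrix whose nonzero singular values are exactly $s_r(A), \ldots, s_k(A)$, so
\[ \|B\|_F^2 = \sum_{i=r}^{k} s_i^2(A), \qquad \operatorname{rank}(B) \le k-r+1. \]
Because $P_r$ is a contraction, $s_l(A_J) \ge s_l((P_r A)_J) = s_l(B_J)$ for every $J \subset [d]$ with $|J| = l$. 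Hence it suffices to exhibit such a $J$ with a good lower bound on $s_l(B_J)$.

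\textbf{Step 2 (barrier selection on $B$).} Construct $J = \{i_1, \ldots, i_l\}$ iteratively. Write $b_j := \operatorname{Col}_j(B)$ and $S_t := \sum_{s=1}^{t} b_{i_s} b_{i_s}^\top$, and maintain the lower-barrier potential $\Phi_{u_t}(S_t) := \operatorname{tr}((S_t - u_t I)^{-1})$ with $u_t < \lambda_{\min}(S_t)$. The Sherman--Morrison identity shows that adding any $b_j b_j^\top$ strictly decreases $\Phi$; averaging this decrease over $j$ with weights $\|b_j\|^2 / \|B\|_F^2$ produces an index $i_{t+1}$ for which the barrier can be advanced by $\delta \asymp \|B\|_F^2 / (r d)$ while keeping $\Phi$ nonincreasing. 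After $l$ iterations, the final barrier lower-bounds $\lambda_{\min}(B_J B_J^\top) = s_l(B_J)^2$, yielding
\[ s_l(B_J)^2 \;\gtrsim\; \frac{r-l}{r\,d} \sum_{i=r}^{k} s_i^2(A). \]
The factor $(r-l)/r$ arises by calibrating the barrier shift $\delta$ to use $r$ rather than the (possibly smaller) stable rank of $B$ as the effective dimension; handling unbalanced tail spectra requires a preliminary truncation $s_i \mapsto \min(s_i, T)$ to make the weighted averaging succeed.

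\textbf{Step 3 (optimize) and main obstacle.} Combining Steps 1 and 2 and taking the best $r \in \{l+1, \ldots, k\}$ yields the stated bound on $s_l(A_{\{i_1, \ldots, i_l\}})^{-1}$; nondegeneracy ($s_l(A_J) > 0$) is automatic because $B_J$ has $l$ linearly independent columns by construction. The hardest part is the calibration inside the barrier iteration of Step 2: tracking the constants so the precise factor $(r-l)/r$ emerges, rather than a weaker function of $l$ and $\operatorname{sr}(B)$, and handling ranges of $r$ where the tail singular values of $A$ collapse onto a few directions (so $\operatorname{sr}(B) \ll k-r+1$) via the truncation device above. The projection reduction in Step 1 and the optimization over $r$ in Step 3 are routine.
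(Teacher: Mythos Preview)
The paper does not prove this lemma at all; it is quoted verbatim as Theorem~6 of Naor--Youssef and used as a black box. So there is no ``paper's own proof'' to compare against, and your sketch should be measured against the original Naor--Youssef argument instead.

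That said, your reduction in Step~1 does not deliver what you claim in Step~2. The projected matrix $B=P_rA$ has rank exactly $k-r+1$, since $P_r$ projects onto a $(k-r+1)$--dimensional subspace of the column space of $A$. Two consequences follow. First, for every $r>k-l+1$ one has $\operatorname{rank}(B)<l$, so no $l$ columns of $B$ can be linearly independent and $s_l(B_J)=0$ identically; your scheme therefore cannot address the upper part of the range $r\in\{l+1,\dots,k\}$, and the statement that nondegeneracy is ``automatic'' is false there. Second, even when $r\le k-l+1$, the Spielman--Srivastava barrier run on $B$ sees an ambient dimension $k-r+1$, and the shift it permits is $\delta\asymp\|B\|_F^2/((k-r+1)\,d)$; after $l$ steps this yields a factor of the form $(k-r+1-l)/(k-r+1)$ (or its square), not $(r-l)/r$. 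There is no mechanism in your setup by which $r$ becomes the ``effective dimension'': the number $r$ only enters through the \emph{index} at which you truncate the spectrum, not through the geometry of $B$. Optimising your actual bound over $r$ gives an inequality that is not equivalent (even up to constants) to the one in the lemma.

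The Naor--Youssef proof avoids this by \emph{not} projecting first. It runs the barrier iteration directly on $A$, working with potentials built from $(AA^\top-uI)^{-1}$; the parameter $r$ and the tail sum $\sum_{i\ge r}s_i(A)^2$ emerge from bounding $\operatorname{tr}\big((AA^\top-uI)^{-1}\big)=\sum_i (s_i^2-u)^{-1}$ at a well-chosen level $u$ between consecutive squared singular values, and the $\min_r$ comes from optimising that choice of $u$ after the iteration. If you want to repair your outline, replace the projection reduction by this single-barrier argument on $A$ itself; the ``truncation device'' you allude to is then unnecessary.
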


\subsection{Least common denominators and the small ball probability.}

The least common denominator (LCD) of a sequence of real numbers was initially introduced in \cite{Rudelson_Advance} and has been an important tool to estimate the small ball probability. Many different versions of LCD have been developed to prove quantitative estimates of the invertibility of various random matrices. We refer the interested readers to a survey \cite{Tikhomirov_ICM} for details. Here, we introduce two different versions of LCD, which have appeared in \cite{Rudelson_CPAM,Rudelson_GAFA}.

\begin{mydef}\label{Def_lcd1}
	Fix $\alpha>0$ and $\gamma\in(0, 1)$. For a vector $a\in\mathbb{R}^{m}$, the LCD is defined as 
	\begin{align}
		D^{(1)}_{\alpha, \gamma}(a):=\inf\{\theta>0: \textnormal{dist}(\theta a, \mathbb{Z}^{m})<\min(\gamma \Vert\theta a\Vert_{2}, \alpha  )  \}.\nonumber
	\end{align}
Further, the LCD of a subspace $H\subset\mathbb{R}^{m}$ is defined as
\begin{align}
	D^{(1)}_{\alpha, \gamma}(H):=\inf_{a\in S^{m-1}\cap H}D^{(1)}_{\alpha, \gamma}(a).\nonumber
\end{align}
Lastly, the LCD of an $m\times n$ matrix $A$ is defined as
\begin{align}
	D^{(1)}_{\alpha, \gamma}(A):=\inf\{\Vert\theta\Vert_{2}:\theta\in\mathbb{R}^{m}, \textnormal{dist}(A^\top\theta, \mathbb{Z}^{n})<\min(\gamma \Vert A^\top\theta\Vert_{2}, \alpha)  \}.\nonumber
\end{align}
\end{mydef}

The L\'{e}vy concentration function of a random vector $X\in\mathbb{R}^{m}$, which quantifies the spread of $X$, is defined as follows: 
\begin{align}
	\mathcal{L}(X, t)=\sup_{y\in\mathbb{R}^{m}}\textsf{P}\big\{\Vert X-y\Vert_{2}\le t  \big\}.\nonumber
\end{align}
This function  measures the small ball probabilities, i.e., the likelihood that the random vector $X$ enters a small ball in the space.
A random vector $X$ is said to have a bounded L\'{e}vy concentration function if there exist absolute constants $a>0$ and $b\in(0, 1)$ such that $\mathcal{L}(X, a)<b$. 

The following result shows a bound for the distance between a random vector  and an arbitrary fixed subspace. This bound will depend on the structure of the subspace, which is expressed by the LCD.

\begin{mylem}[Corollary 4.8 in \cite{Livshys_JAM}]\label{Lem_distance_to_general_subspace_random_rounding}
	Let $X$ be a random vector in $\mathbb{R}^{n}$ whose coordinates are independent copies of $\xi$. Assume that there exist $a, b>0$ such that $\mathcal{L}(\xi, a)<b$. Let $m<n$ and $H$ be a fixed $(n-m)$-dimensional subspace of $\mathbb{R}^{n}$. Then for any $v\in\mathbb{R}^{n}, \alpha>0$ and $\gamma\in (0, 1)$, one has for every $\varepsilon>\sqrt{n}/D^{(1)}_{\alpha, \gamma}(H^{\perp})$
	\begin{align}
		\textsf{P}\{\textnormal{dist}(X, H+v)\le \varepsilon \sqrt{m}   \}\le \big( \frac{C_{1}\varepsilon}{\gamma}  \big)^{m}+C_{2}^{m}e^{-c\alpha^{2}},\nonumber
	\end{align}
where $C_{1}, C_{2}$ and $c$ are positive constants depending only on $a, b$.
\end{mylem}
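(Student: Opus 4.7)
The plan follows the outline in the introduction: convert the singular-value bound to a small-ball estimate for column distances, then combine a high-probability structural event on the random subspace with a tensorization argument.

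\emph{Step 1 (Distance reduction).} Writing $X_1,\ldots,X_n$ for the columns of $M$ and setting $H=\mathrm{span}(X_{k+1},\ldots,X_n)$, the variational formula \eqref{Eq_intro_def_singularvalues} together with the restricted invertibility lemma (Lemma \ref{Lem_Naor_restricted_invertibility}) lets one replace the event $\{s_{n-k+1}(M)\le \varepsilon/\sqrt{n}\}$ by a small-ball event on the distances $\mathrm{dist}(X_i,H)$ for $i=1,\ldots,k$. Concretely, consider the projected matrix $\tilde M=[P_{H^\perp}X_1|\cdots|P_{H^\perp}X_k]$ whose column lengths are exactly $\mathrm{dist}(X_i,H)$; by choosing a test space containing $H^\perp$ in the min--max formula, its smallest singular value dominates $s_{n-k+1}(M)$ up to constants, and Lemma \ref{Lem_Naor_restricted_invertibility} converts a bound on this smallest singular value into a bound on a product of column lengths along a well-chosen index set. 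This reduces the theorem to a super-exponential small-ball bound on $\prod_{i\le k}\mathrm{dist}(X_i,H)$.

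\emph{Step 2 (Constructing the high-probability event $\Omega_2$).} The main technical step is to build an event $\Omega_2$, measurable with respect to $X_{k+1},\ldots,X_n$, with $\textsf{P}(\Omega_2^c)\le e^{-c_5 kn}$, on which some subspace $\tilde F\subset H^\perp$ has an exponentially large LCD
$$D^{(1)}_{c_1\sqrt{n},\,c_2}(\tilde F)>C\sqrt{n}\,e^{c_3 n/k}.$$
The Nguyen-style event $\Omega_1=\{D^{(1)}_{c_1\sqrt{n},c_2}(H^\perp)>C\sqrt{n}e^{c_3 n/k}\}$ has complementary probability only $e^{-c_4 n}$, which is too large for our target. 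Following Rudelson \cite{Rudelson_Annals}, I would start from a large base event $\Omega_3$ and apply the partitioning Lemma \ref{Lem_set_partition} to peel off pieces supported on compressible vectors in $H^\perp$ or on incompressible vectors with abnormally small LCD. The random-rounding estimates of Lemmas \ref{Lem_Rudelson_subgaussian_compressible} and \ref{Lem_Rudelson_subgaussian_incompressible} ensure that each such piece carries probability at most $e^{-ckn}$, and the residual set $\Omega_2$ retains the desired LCD bound on a suitable subspace.

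\emph{Step 3 (Small-ball bound and tensorization).} Conditional on $\Omega_2$, Lemma \ref{Lem_distance_to_general_subspace_random_rounding} applied to the fixed $(n-k)$-dimensional subspace $H$ yields, for each $i\le k$ and every $\varepsilon$ above the threshold $\sqrt{n}/D^{(1)}_{c_1\sqrt{n},c_2}(H^\perp)$ supplied by $\tilde F$,
$$\textsf{P}\{\mathrm{dist}(X_i,H)\le \varepsilon\sqrt{k}\mid X_{k+1},\ldots,X_n\}\le (C\varepsilon/\gamma)^{k}+C^{k}e^{-cn}.$$
Since $X_1,\ldots,X_k$ are i.i.d.\ and independent of $H$, a standard tensorization argument upgrades this to a product bound of order $(C\varepsilon/k)^{\gamma k^2}$, after tuning the free parameter $\gamma$ and absorbing the $C^k e^{-cn}$ errors into $e^{-c_1 kn}$ (which uses the hypothesis $k\ge \log n$). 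Combining with Step 1 and adding $\textsf{P}(\Omega_2^c)\le e^{-c_5 kn}$ produces the theorem.

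\emph{Main obstacle.} The principal difficulty is Step 2. One must trade off two apparently incompatible demands: the exceptional event $\Omega_2^c$ should have probability $e^{-c_5 kn}$ (an extra factor of $k$ in the exponent beyond Nguyen's setting), while on $\Omega_2$ one still needs an exponentially large LCD of order $\sqrt{n}\,e^{c_3 n/k}$. Global LCD control on $H^\perp$ of the required strength is unavailable, so one must localize the LCD analysis to a sub-structure via the random-rounding machinery of Lemmas \ref{Lem_Rudelson_subgaussian_compressible} and \ref{Lem_Rudelson_subgaussian_incompressible}. Verifying that the partition produced by Lemma \ref{Lem_set_partition} decomposes $\Omega_2^c$ into pieces each covered by one of these sharp compressible/incompressible estimates is the most intricate part of the argument.
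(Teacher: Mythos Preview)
Your proposal does not address the stated lemma at all. The statement you were asked to prove is Lemma~\ref{Lem_distance_to_general_subspace_random_rounding}: a small-ball inequality for $\textnormal{dist}(X,H+v)$ when $H$ is a \emph{fixed} $(n-m)$-dimensional subspace and $X$ has i.i.d.\ coordinates with bounded L\'evy concentration. This is a single-vector, single-subspace anti-concentration result governed by the LCD $D^{(1)}_{\alpha,\gamma}(H^\perp)$. The paper does not prove it; it is quoted verbatim as Corollary~4.8 of Livshyts~\cite{Livshys_JAM}, and a proof would proceed by writing $\textnormal{dist}(X,H+v)=\Vert P_{H^\perp}X-P_{H^\perp}v\Vert_2$, applying an Esseen-type small-ball estimate for the projection $P_{H^\perp}X$, and using the LCD to control the characteristic function---none of which appears in your write-up.

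What you have written is an outline of the proof of the paper's main theorem, Theorem~\ref{Theo_Main2}, in which Lemma~\ref{Lem_distance_to_general_subspace_random_rounding} is merely one of the cited tools. Even read as a sketch of Theorem~\ref{Theo_Main2}, there are discrepancies with the paper's argument: (i) the reduction in Step~1 yields a bound on $\sum_{m\le l}\Vert P_{H^\perp}\boldsymbol{c}_{i_m}\Vert_2^2$, not on a product of distances; (ii) the subspace $\tilde F\subset H^\perp$ constructed via Lemma~\ref{Lem_set_partition} carries a large $D^{(2)}$-LCD (Definition~\ref{Def_lcd2}), and the conditional small-ball estimate used is Lemma~\ref{Lem_Rudelson_LCD_distance}, not Lemma~\ref{Lem_distance_to_general_subspace_random_rounding} with $D^{(1)}$; (iii) in Step~3 you invoke the LCD of $H^\perp$ rather than of $\tilde F$, which is precisely the distinction that separates the $e^{-cn}$ bound of Nguyen from the $e^{-ckn}$ bound sought here. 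So the proposal neither proves the assigned lemma nor faithfully reproduces the mechanism of the main theorem.
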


From Lemma \ref{Lem_distance_to_general_subspace_random_rounding}, we can see that to  describe better the behavior of the quantity $\text{dist}(X, H+v)$, we need to estimate the $D^{(1)}_{\alpha, \gamma}(H^{\perp})$. The following lemma shows that the LCD of a random subspace is exponentially large.

\begin{mylem}[Theorem 6.2 in \cite{Livshys_JAM}]\label{Lem_prob_LCD_1}
	Let $X$ be a random vector in $\mathbb{R}^{n}$ with i.i.d. coordinates $\xi_{i}$. Assume that there exist $a, b, K>0$ such that
	\begin{align}
		\mathcal{L}(\xi_{1}, a)<b, \quad \textsf{E}\Vert X\Vert_{2}\le K\sqrt{n}.\nonumber
	\end{align}
For a fixed $m$, let $M$ be an $n\times (n-m)$ matrix whose columns are independent copies of $X$. Let $H^\perp=\textnormal{Ker}(M^\top)$ be the orthogonal complement space of $\textnormal{span}\{\textnormal{Col}_{i}(M), 1\le i\le n-m  \}$. Then for an appropriate constant $c>0$, depending only on $a, b$ and $K$, we have for $1\le m\le cn$ 
\begin{align}
	\textsf{P}\{D^{(1)}_{c_{1}\sqrt{n}, c_{2}}(H^\perp)\le C\sqrt{n}e^{\frac{c_{3}n}{m}}   \}\le e^{-c_{4}n},\nonumber
\end{align}
where $C, c_{1},\cdots, c_{4}$ are positive constants depending only on $a, b, K$.
\end{mylem}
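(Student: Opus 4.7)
I would establish the LCD lower bound by a Rudelson-Vershynin style net-and-small-ball union bound on the sphere of $H^{\perp}$, sharpened with the random-rounding device of \cite{Livshys_JAM}. Fix $\alpha:=c_{1}\sqrt{n}$, $\gamma:=c_{2}$, and the target $D_{0}:=C\sqrt{n}\,e^{c_{3}n/m}$. The goal is to show
\begin{align}
\textsf{P}(\mathcal{E})\le e^{-c_{4}n},\qquad \mathcal{E}:=\bigl\{\exists\,x\in S^{n-1}\cap H^{\perp}\,:\,D^{(1)}_{\alpha,\gamma}(x)\le D_{0}\bigr\},\nonumber
\end{align}
in the range $1\le m\le cn$ with $c$ small.

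First I would rule out compressible vectors in $H^{\perp}$ by a separate, more elementary net argument on $\text{Sparse}(\delta n)$ combined with the standard anti-concentration of a random $(n-m)\times\delta n$ submatrix, giving $\textsf{P}\{\text{Comp}(\delta,\rho)\cap H^{\perp}\neq\varnothing\}\le e^{-c_{4}n}$. On the complement, $S^{n-1}\cap H^{\perp}\subset\text{Incomp}(\delta,\rho)$, and a standard property of incompressible vectors forces $D^{(1)}_{\alpha,\gamma}(x)\gtrsim\sqrt{n}$ automatically, so it suffices to handle $D\in[c_{5}\sqrt{n},D_{0}]$. I would partition this range into $O(n/m)$ dyadic intervals $[D,2D]$ with sub-events $\mathcal{E}_{D}$, and unfold Definition \ref{Def_lcd1}: any offending $x\in\mathcal{E}_{D}$ admits $\theta\in(D,2D]$ and $p\in\mathbb{Z}^{n}$ with $\|p\|_{2}\le 3D$ and $\|\theta x-p\|_{2}<\min(\gamma\theta,\alpha)$. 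Setting $y:=p/\theta$ gives a lattice approximant with $\|y-x\|_{2}<\min(\gamma,\alpha/\theta)$, $\|y\|_{2}\in[1-\gamma,1+\gamma]$, and $D^{(1)}_{\alpha,\gamma}(y)\le 2D$. A volume count produces a net $\mathcal{N}_{D}$ of cardinality $|\mathcal{N}_{D}|\le(CD/\sqrt{n})^{n}$ (up to polynomial slack from the $\theta$-grid).

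Second, for each $y\in\mathcal{N}_{D}$ the LCD-based Esseen inequality (valid because $\xi$ has bounded L\'evy concentration) yields $\mathcal{L}(\langle X,y\rangle,t)\le C(t+1/D)$ for $t\ge c/D$, and independence of the $n-m$ columns of $M$ together with the standard tensorization lemma promotes this to $\textsf{P}\{\|M^{\top}y\|_{2}\le t\sqrt{n-m}\}\le(Ct+C/D)^{n-m}$. To exploit $\mathcal{E}_{D}\Rightarrow M^{\top}x=0$, one would naturally write $\|M^{\top}y\|_{2}=\|M^{\top}(y-x)\|_{2}\le\|M\|\cdot\|y-x\|_{2}$ and invoke the above at $t\asymp 1/D$, but under the general hypotheses of the lemma a bound $\|M\|\lesssim\sqrt{n}$ is not available with $e^{-\Omega(n)}$ failure probability. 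Instead, I would adopt Livshyts's \emph{random rounding}: sample an auxiliary $P\in\mathbb{Z}^{n}$ with $\textsf{E}[P\mid x]=\theta x$ and $\|P-\theta x\|_{\infty}\le 1$, then carry out the small-ball computation on the Fourier side for $\langle X_{i},P/\theta\rangle$ directly, bypassing $\|M\|$. A union bound then gives
\begin{align}
\textsf{P}(\mathcal{E}_{D})\le\Bigl(\tfrac{CD}{\sqrt{n}}\Bigr)^{n}\Bigl(\tfrac{C}{D}\Bigr)^{n-m}=C^{2n-m}\,D^{m}/n^{n/2},\nonumber
\end{align}
and substituting $D\le D_{0}$ (so $D^{m}\le C^{m}n^{m/2}e^{c_{3}n}$) produces $C^{2n}e^{c_{3}n}n^{-(n-m)/2}$, which is $\le e^{-c_{4}n}$ for $m\le cn$ with $c<1$ small, since the factor $n^{-(n-m)/2}$ dominates. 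Summing over the $O(n/m)$ dyadic levels only costs an absorbable polynomial factor.

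\textbf{Main obstacle.} The delicate step is the random rounding. In the heavy-tailed generality of the lemma (only bounded L\'evy concentration plus $\textsf{E}\|X\|_{2}\le K\sqrt{n}$) one cannot invoke $\|M\|\lesssim\sqrt{n}$ with exponentially small failure probability, so the na\"ive ``triangle inequality plus operator norm'' reduction of the kernel event $\{x\in\text{Ker}(M^{\top})\}$ to a lattice event fails. The random rounding of Livshyts is precisely what allows the LCD small-ball estimate to be performed on the integer side, and implementing it rigorously (especially the Fourier bound on $\mathbb{Z}^{n}$) is the technical crux.
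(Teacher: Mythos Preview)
The paper does not supply its own proof of this lemma: it is quoted verbatim as Theorem~6.2 of \cite{Livshys_JAM} and used as a black box in the proof of Theorem~\ref{Theo_Main1}. So there is no ``paper's own proof'' to compare against here.

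That said, your outline is a faithful sketch of Livshyts's argument in \cite{Livshys_JAM}: dispose of compressible kernel vectors first, then on the incompressible part run a dyadic decomposition of the LCD level sets, build a lattice net of size $(CD/\sqrt{n})^{n}$ at each scale, and beat it with a tensorized small-ball bound over the $n-m$ columns. You have also correctly identified the crux: under only $\mathcal{L}(\xi,a)<b$ and $\textsf{E}\|X\|_{2}\le K\sqrt{n}$ there is no exponential control on $\|M\|$, so the na\"ive passage from a kernel vector $x$ to its lattice approximant $y$ via $\|M^{\top}y\|_{2}\le\|M\|\,\|x-y\|_{2}$ fails, and random rounding is needed. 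One small imprecision: in Livshyts's implementation the random rounding is not used to move the Esseen bound to the Fourier side on $\mathbb{Z}^{n}$; rather, one rounds $x$ to a random $Y$ with $\textsf{E}[Y]=x$ and bounded coordinates, and controls $\textsf{E}\|M^{\top}(Y-x)\|_{2}^{2}$ using only the Hilbert--Schmidt norm of $M$ (equivalently, the column second moments guaranteed by $\textsf{E}\|X\|_{2}\le K\sqrt{n}$), which \emph{is} available with the required probability. The small-ball estimate is then applied to $M^{\top}Y$ for a deterministic realization of $Y$ on the lattice net. Your final union-bound arithmetic is correct in spirit, though the exponent bookkeeping in the actual proof is a bit more delicate than the one-line display you give.
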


Next, we shall introduce another version of LCD. 
\begin{mydef}\label{Def_lcd2}
	Let $A$ be an $m\times n$ matrix, and let $\alpha>0, \gamma\in(0, 1)$. Define the LCD of $A$ as follows:
	\begin{align}
		D^{(2)}_{\alpha, \gamma}(A):=\inf\Big(\Vert \theta\Vert_{2}: \theta\in\mathbb{R}^{m}, \textnormal{dist}(A^\top\theta, \mathbb{Z}^{n})<\alpha\sqrt{\log_{+}\frac{\gamma\Vert A^{\top}\theta\Vert_{2}}{\alpha}}   \Big).\nonumber
	\end{align}
Let $H\subset\mathbb{R}^{n}$ be a  subspace. Define the LCD of $H$ by
\begin{align}
		D^{(2)}_{\alpha, \gamma}(H):=D^{(2)}_{\alpha, \gamma}(P_{H})=\inf\Big(\Vert y\Vert_{2}: y\in H, \textnormal{dist}(y, \mathbb{Z}^{n})<\alpha\sqrt{\log_{+}\frac{\gamma\Vert y\Vert_{2}}{\alpha}}   \Big),\nonumber
\end{align}
where $P_{H}$ is the orthogonal projection on $H$.
\end{mydef}

\begin{mylem}[Corollary 3.10 in \cite{Rudelson_Annals}]\label{Lem_Rudelson_LCD_distance}
	Consider a random vector $X=(\xi_{1},\cdots,  \xi_{n})$, where $\xi_{k}$ are independent copies of a non-constant subgaussian random variable $\xi$ in $\mathbb{R}$. Let $E$ be a subspace of $\mathbb{R}^{n}$ with $\textnormal{dim}(E)=m$, and let $P_{E}$ be the orthogonal projection onto $E$. Then for every $\alpha\ge c\sqrt{m}$ we have
	\begin{align}
		\mathcal{L}(P_{E}X, t\sqrt{m})\le \big(\frac{C\alpha}{\gamma\sqrt{m}}   \big)^{m}\big(t+\frac{\sqrt{m}}{D^{(2)}_{\alpha, \gamma}(E)}  \big)^{m},\quad t\ge 0,\nonumber
	\end{align}
where $C, c$ are positive constants depending only on the subgaussian moment of $\xi$.
\end{mylem}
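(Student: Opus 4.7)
The plan is to prove the Lévy concentration bound for $P_E X$ by the standard Fourier-analytic route, Esseen's inequality followed by a characteristic-function estimate, using the logarithmic slack built into $D^{(2)}_{\alpha,\gamma}$ to extract the polynomial prefactor $(C\alpha/(\gamma\sqrt{m}))^m$.

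\textbf{Step 1: Reduction to a Fourier integral.} I would apply a multidimensional Esseen inequality to $P_EX$, viewed as a random vector in $E\cong\mathbb{R}^m$. This gives, for a radius $R$ proportional to $1/(t\sqrt{m})$,
\begin{align}
\mathcal{L}(P_EX,t\sqrt{m})\le (Ct\sqrt{m})^{m}\int_{B_E(0,R)}\bigl|\phi_{P_EX}(\theta)\bigr|\,d\theta, \nonumber
\end{align}
where the integral is taken over the $m$-dimensional Euclidean ball inside $E$. For $\theta\in E$, writing $\langle \theta,P_EX\rangle=\langle \theta,X\rangle=\sum_{j}\theta_j\xi_j$ and using independence of the $\xi_j$, the characteristic function factorises as $\phi_{P_EX}(\theta)=\prod_{j=1}^{n}\phi_{\xi}(\theta_j)$.

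\textbf{Step 2: Characteristic function estimate.} Because $\xi$ is non-constant and subgaussian, its symmetrisation $\tilde\xi=\xi-\xi'$ satisfies $\mathsf{P}(|\tilde\xi|\ge\beta)\ge p$ for constants $\beta,p>0$ depending only on the subgaussian moment. Using $|\phi_\xi(s)|^2=\mathsf{E}\cos(s\tilde\xi)\le\exp(-c\,\mathsf{E}[1-\cos(s\tilde\xi)])$ together with the elementary bound $1-\cos(2\pi u)\gtrsim \mathrm{dist}(u,\mathbb{Z})^2$, and truncating the rare event $|\tilde\xi|\ge M$ using the subgaussian tail, one obtains an inequality of the form
\begin{align}
\prod_{j=1}^{n}|\phi_\xi(\theta_j)|\le\exp\bigl(-c_0\,\mathrm{dist}(c_1\theta,\mathbb{Z}^n)^2\bigr) \nonumber
\end{align}
after an absolute rescaling $\theta\mapsto c_1\theta$, valid uniformly on $B_E(0,R)$. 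This is the standard passage from subgaussian/non-constant laws to a genuine distance-to-lattice bound.

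\textbf{Step 3: Splitting by the LCD.} By the very definition of $D^{(2)}_{\alpha,\gamma}(E)$, every $y\in E$ with $\|y\|_2<D^{(2)}_{\alpha,\gamma}(E)$ satisfies $\mathrm{dist}(y,\mathbb{Z}^n)\ge\alpha\sqrt{\log_+(\gamma\|y\|_2/\alpha)}$. I would split $B_E(0,R)$ into two pieces. On the inner region $\|\theta\|_2\le \sqrt{m}/D^{(2)}_{\alpha,\gamma}(E)$ I bound the integrand by $1$ and pay the volume, which contributes the $(\sqrt{m}/D^{(2)}_{\alpha,\gamma}(E))^m$ term. On the outer annulus, the LCD inequality combined with Step 2 gives
\begin{align}
\prod_{j}|\phi_\xi(\theta_j)|\le \exp\bigl(-c_0\alpha^{2}\log_+(\gamma\|\theta\|_2/\alpha)\bigr)=\bigl(\gamma\|\theta\|_2/\alpha\bigr)^{-c_0\alpha^{2}}. \nonumber
\end{align}
Integrating this polynomial decay in spherical shells in $E$ and using the hypothesis $\alpha\ge c\sqrt{m}$ (which guarantees $c_0\alpha^2>m+1$ for the appropriate absolute $c$), the resulting integral is dominated by the contribution near the Esseen radius and produces the factor $t^{m}(C\alpha/(\gamma\sqrt{m}))^m$. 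Summing the two contributions and repackaging the constants yields the claimed bound
\begin{align}
\mathcal{L}(P_EX,t\sqrt{m})\le \Bigl(\tfrac{C\alpha}{\gamma\sqrt{m}}\Bigr)^{m}\Bigl(t+\tfrac{\sqrt{m}}{D^{(2)}_{\alpha,\gamma}(E)}\Bigr)^{m}. \nonumber
\end{align}

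\textbf{Main obstacle.} The delicate point is Step 3: the polynomial decay $\|\theta\|^{-c_0\alpha^{2}}$ has to beat the volume growth $\|\theta\|^{m-1}$ of shells in the $m$-dimensional subspace $E$, and this is precisely why the hypothesis $\alpha\ge c\sqrt{m}$ is needed. Getting the exponent right so that the prefactor comes out as $(C\alpha/(\gamma\sqrt{m}))^m$ (rather than something worse like $(C\alpha/\gamma)^m$) requires a careful accounting of how the log term inside $D^{(2)}$ converts to a power, and how the integration constants telescope with the $m$-dimensional Esseen constants. A secondary subtlety is calibrating the rescaling $c_1$ in Step 2 so that the distance-to-lattice appearing in the characteristic function bound is exactly the one used in the definition of $D^{(2)}_{\alpha,\gamma}$; this is where non-constancy of $\xi$ (giving strictly positive $p,\beta,c_0$) is used in an essential way.
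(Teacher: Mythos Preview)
The paper does not give its own proof of this lemma; it is quoted as Corollary~3.10 of \cite{Rudelson_Annals} and used as a black box in Section~3. Your outline---Esseen's inequality in the $m$-dimensional subspace $E$, a characteristic-function bound via symmetrisation and subgaussian truncation, then splitting the Fourier integral according to the logarithmic LCD and using $\alpha\ge c\sqrt{m}$ to make the polynomial decay beat the $m$-dimensional volume growth---is exactly the method by which this result is proved in that reference (and in the earlier Rudelson--Vershynin paper \cite{Rudelson_GAFA} where the definition $D^{(2)}$ first appears), so there is nothing in the present paper to compare against and your approach matches the cited source.

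One small caveat on Step~2: the displayed bound $\prod_j|\phi_\xi(\theta_j)|\le\exp\bigl(-c_0\,\mathrm{dist}(c_1\theta,\mathbb{Z}^n)^2\bigr)$ with a single deterministic rescaling $c_1$ is a slight oversimplification. What one actually obtains is $\exp\bigl(-c_0\,\mathsf{E}_{\bar\xi}\,\mathrm{dist}(\bar\xi\theta,\mathbb{Z}^n)^2\bigr)$, and only after conditioning on $|\bar\xi|\in[c_2,c_1]$ (which has positive probability by non-constancy plus the subgaussian tail) does the rescaling collapse to a bounded range; the logarithmic form of $D^{(2)}$ is robust enough under such bounded rescalings that your Step~3 goes through unchanged. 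This is a routine refinement and does not affect the structure of your argument.
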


\begin{mylem}[Lemma 3.11 in \cite{Rudelson_Annals}]\label{Lem_Rudelson_annals_U_theta}
	Let $\delta, \rho\in (0, 1)$. Let $U$ be an $n\times l$ matrix satisfying 
	\begin{align}
		U\mathbb{R}^{l}\cap S^{n-1}\subset \textnormal{Incomp}(\delta, \rho).\nonumber
	\end{align}
Then for any $\theta=(\theta_{1}, \cdots, \theta_{l})$ and $\alpha>0$,
\begin{align}
	\textnormal{dist}(U\theta, \mathbb{Z}^{n})\ge \alpha\sqrt{\log_{+}\frac{\rho\Vert U\theta\Vert_{2}}{\alpha}}\,\,\,\text{when}\,\,\, \Vert U\theta\Vert_{2}\le \sqrt{\delta n}/2.\nonumber
\end{align}
\end{mylem}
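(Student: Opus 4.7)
The plan is to translate the lemma into a statement about the nearest lattice point of $x := U\theta$ and dispatch it by a dichotomy on the support size of that lattice point, using only the incompressibility hypothesis together with the elementary fact that an integer vector has Euclidean norm bounded below by the square root of its support size.

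Let $r = \Vert x\Vert_{2}$ and fix $p \in \mathbb{Z}^{n}$ attaining $\Vert x-p\Vert_{2} = s := \textnormal{dist}(x, \mathbb{Z}^{n})$. I would first reduce the claim to showing $s \ge \rho r$ under the hypothesis $r \le \sqrt{\delta n}/2$. If $\rho r \le \alpha$ then $\alpha\sqrt{\log_{+}(\rho r/\alpha)} = 0$ and there is nothing to prove; otherwise, setting $u := \rho r/\alpha > 1$, the elementary inequality $u \ge \sqrt{\log u}$ (which follows from $u - \log u \ge 1$ for all $u > 0$) gives $\rho r \ge \alpha\sqrt{\log(\rho r/\alpha)}$, so $s \ge \rho r$ is indeed sufficient.

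Next I would split on whether $p$ is sparse. If $\vert\textnormal{supp}(p)\vert \le \delta n$, then $p/r \in \textnormal{Sparse}(\delta n)$; since $x/r \in U\mathbb{R}^{l} \cap S^{n-1} \subset \textnormal{Incomp}(\delta, \rho)$ by hypothesis, the definition of incompressible vectors forces $\Vert x/r - p/r\Vert_{2} > \rho$, i.e. $s > \rho r$. If instead $\vert\textnormal{supp}(p)\vert > \delta n$, then the integrality of $p$ gives $\Vert p\Vert_{2}^{2} \ge \vert\textnormal{supp}(p)\vert > \delta n$, so $\Vert p\Vert_{2} > \sqrt{\delta n}$; combining with $\Vert x\Vert_{2} \le \sqrt{\delta n}/2$ and the triangle inequality,
\begin{align*}
s = \Vert x - p\Vert_{2} \ge \Vert p\Vert_{2} - \Vert x\Vert_{2} > \sqrt{\delta n}/2 \ge r \ge \rho r,
\end{align*}
where the last step uses $\rho \in (0,1)$. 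In either case $s \ge \rho r$, and the reduction above finishes the proof.

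There is no real obstacle in this argument; the only point that demands any care is the reduction step, i.e. recognizing that the target lower bound $\alpha\sqrt{\log_{+}(\rho r/\alpha)}$ is always dominated by $\rho r$. Once that is in place, the dichotomy on $\vert\textnormal{supp}(p)\vert$ cleanly separates the regime where incompressibility is the active ingredient (sparse $p$) from the regime where the norm hypothesis alone suffices (dense $p$).
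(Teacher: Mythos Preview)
Your argument is correct. The paper does not supply its own proof of this lemma; it is quoted as Lemma~3.11 from \cite{Rudelson_Annals} and used as a black box, so there is no in-paper proof to compare against. One cosmetic point: Case~1 of your dichotomy divides by $r=\Vert U\theta\Vert_{2}$, so you should remark that the degenerate case $r=0$ is already absorbed by your reduction step (then $\rho r=0\le\alpha$ and the right-hand side vanishes).
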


\begin{mylem}[Theorem 1.1 in \cite{Rudelson_IMRN} ]\label{Lem_smallballprobability_bounded_density}
	Let $X=(\xi_{1}, \cdots, \xi_{n})^\top\in\mathbb{R}^{n}$ where $\xi_{i}$ are independent random variables. Assume that the density functions of $\xi_{i}$  are bounded by $K$ almost everywhere. Denote $P$ the orthogonal projection in $\mathbb{R}^{n}$ onto a $d$-dimensional subspace. Then the density function of $PX$ is bounded by $(CK)^{d}$ almost everywhere. Here $C$ is a positive absolute constant.
\end{mylem}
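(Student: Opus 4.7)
The plan is to write the density of $PX$ explicitly as an integral over the $(n-d)$-dimensional fibre $y+E^\perp$ of $\mathbb{R}^n$ and then invoke the geometric rank-one Brascamp--Lieb inequality of Ball/Barthe, exploiting the Parseval frame produced by an orthonormal basis of $E^\perp$, where $E:=P(\mathbb{R}^n)$. First I would fix such an orthonormal basis and let $V$ be the corresponding $n\times(n-d)$ isometric embedding; write $v_i\in\mathbb{R}^{n-d}$ for the $i$-th row of $V$ viewed as a column vector. Parametrising the fibre by $z=V\zeta$ with $\zeta\in\mathbb{R}^{n-d}$, independence of the coordinates of $X$ gives
\begin{equation*}
f_{PX}(y)=\int_{E^\perp}\prod_{i=1}^{n}f_i(y_i+z_i)\,dz=\int_{\mathbb{R}^{n-d}}\prod_{i=1}^{n}f_i\bigl(y_i+\langle v_i,\zeta\rangle\bigr)\,d\zeta.
\end{equation*}
Orthonormality of the columns of $V$ reads $V^\top V=I_{n-d}$, i.e.\ the Parseval identity $\sum_i v_iv_i^\top=I_{n-d}$. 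Setting $c_i:=\|v_i\|^2$ and $\hat v_i:=v_i/\|v_i\|$, the $\hat v_i$ are unit vectors in $\mathbb{R}^{n-d}$ with $\sum c_i\hat v_i\hat v_i^\top=I_{n-d}$ and $\sum c_i=n-d$.

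Now apply Ball/Barthe's geometric Brascamp--Lieb inequality: choosing $g_i(t):=f_i(y_i+\|v_i\|t)^{1/c_i}$ rewrites the integrand as $\prod_i g_i(\langle\hat v_i,\zeta\rangle)^{c_i}$, so that
\begin{equation*}
f_{PX}(y)\le\prod_{i=1}^{n}\Bigl(\int_{\mathbb{R}} g_i\Bigr)^{c_i}=\prod_{i=1}^{n}\|v_i\|^{-c_i}\Bigl(\int_{\mathbb{R}} f_i^{1/c_i}\Bigr)^{c_i}.
\end{equation*}
The density bound $f_i\le K$ combined with $c_i\le 1$ gives $\int f_i^{1/c_i}\le K^{1/c_i-1}$, whence $\bigl(\int f_i^{1/c_i}\bigr)^{c_i}\le K^{1-c_i}$; the product over $i$ telescopes to $K^{n-\sum c_i}=K^d$, absorbing the dependence on the coordinate densities.

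It remains to control the geometric factor $\prod_i\|v_i\|^{-c_i}=\exp\bigl(\tfrac{1}{2}\sum_i(-c_i\log c_i)\bigr)$. Since $0<c_i\le 1$ and $\sum c_i=n-d$, normalising to $p_i:=c_i/(n-d)$ and using $H(p)\le\log n$ yields $\sum_i(-c_i\log c_i)\le (n-d)\log\tfrac{n}{n-d}$, and the elementary bound $(1-t)\log\tfrac{1}{1-t}\le t$ on $[0,1)$ applied with $t=d/n$ then gives $(n-d)\log\tfrac{n}{n-d}\le d$. Putting the two halves together, $f_{PX}(y)\le e^{d/2}K^{d}\le(CK)^d$ uniformly in $y\in E$, which is the desired bound. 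The main obstacle in this approach is the correct calibration of the Brascamp--Lieb weights to the non-unit frame $\{v_i\}$: the choice $c_i=\|v_i\|^2$ is the unique one that simultaneously makes the geometric penalty $\prod\|v_i\|^{-c_i}$ bounded by $e^{O(d)}$ uniformly in $n$ and collapses the density factor $\prod\bigl(\int f_i^{1/c_i}\bigr)^{c_i}$ into the clean expression $K^d$.
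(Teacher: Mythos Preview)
The paper does not prove this lemma; it is quoted verbatim as Theorem~1.1 of Rudelson--Vershynin \cite{Rudelson_IMRN} and used as a black box. Your proposal is correct and is essentially the standard argument (and, in fact, the one given in \cite{Rudelson_IMRN}): parametrise the fibre by an isometry $V:\mathbb{R}^{n-d}\to E^\perp$, note that the rows $v_i$ of $V$ form an isotropic system $\sum_i v_iv_i^\top=I_{n-d}$ with weights $c_i=\|v_i\|^2\in[0,1]$ summing to $n-d$, apply Ball's geometric Brascamp--Lieb inequality, and bound the two resulting products separately. The density product collapses to $K^d$ via $\int f_i^{1/c_i}\le K^{1/c_i-1}$, and the entropy bound $\sum(-c_i\log c_i)\le (n-d)\log\frac{n}{n-d}\le d$ controls the Jacobian factor, giving $C=\sqrt{e}$.

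One small omission: you tacitly assume $v_i\neq 0$ for all $i$, which fails exactly when $e_i\in E$. For such indices the factor $f_i(y_i+\langle v_i,\zeta\rangle)=f_i(y_i)\le K$ is constant in $\zeta$ and can be pulled outside the integral; since there are at most $d$ of them, this contributes at most $K^{|S|}$ with $|S|\le d$, while the remaining indices still satisfy $\sum_{i\notin S}c_i=n-d$ and produce $K^{d-|S|}$ from the Brascamp--Lieb step, so the final bound is unchanged.
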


\subsection{Almost orthogonal vectors.}

Let $\nu\in(0, 1)$. An $r$-tuple of vectors $$(v_{1}, \cdots, v_{r})\subset \mathbb{R}^{n} \backslash \{0 \}$$ is called $\nu$-almost orthogonal if the $n\times r$ matrix $W=(\frac{v_{1}}{\Vert v_{1}\Vert_{2}}, \cdots, \frac{v_{r}}{\Vert v_{r}\Vert_{2}})$ satisfies
\begin{align}
	1-\nu\le s_{r}(W)\le s_{1}(W)\le 1+\nu.\nonumber
\end{align}

\begin{mylem}[Lemma 3.3 in \cite{Rudelson_Annals}]\label{Lem_set_partition}
	Let $W\subset \mathbb{R}^{n}\backslash\{ 0\}$ be a closed set and $r<l\le n$. Let $E\subset\mathbb{R}^{n}$ be a linear subspace satisfying $\textnormal{dim}(E)=l$. Then at least one of the following events occurs.
	
	\begin{itemize}
		\item[(1)]  There exist vectors $v_{1}, \cdots, v_{r}\in E\cap W$ satisfying
		
		$(\textnormal{i})$ The $r$-tuple $(v_{1}, \cdots, v_{r})$ is $(\frac{1}{8})$-almost orthogonal;
		
		$(\textnormal{ii})$ For any $\theta=(\theta_{1}, \cdots, \theta_{r})^\top$, we have $$\sum\theta_{i}v_{i}\notin W\,\,\, \text{when}\,\,\, \Vert\theta\Vert_{2}\le \frac{1}{20\sqrt{r}}.$$
		
		\item[(2)] There exists a subspace $F\subset E$ with $\textnormal{dim}(F)=l-r$ such that $F\cap W=\emptyset$.

	\end{itemize}

\end{mylem}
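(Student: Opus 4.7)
The plan is a greedy extraction paired with a stopping dichotomy. Set $M := 20\sqrt{r}$ and $F_0 := E$. At stage $j \ge 1$, given orthogonal $v_1,\ldots,v_{j-1} \in E \cap W$, put $F_{j-1} := E \cap \textnormal{span}(v_1,\ldots,v_{j-1})^\perp$, which has dimension $l-j+1$. Since $W$ is closed and $0 \notin W$, the infimum $T_j := \inf\{\|v\|_2 : v \in F_{j-1}\cap W\}$ is strictly positive whenever the set is non-empty, and it is attained; I take $v_j$ to be a minimizer and set $F_j := F_{j-1}\cap v_j^\perp$. I continue as long as $F_{j-1}\cap W \ne \emptyset$ and $T_j < M\cdot T_1$.

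Suppose first that this procedure completes through $j = r$. Then the $r$-tuple $(v_1,\ldots,v_r)$ is pairwise orthogonal, so property (i) is trivial. For (ii), let $w = \sum_i \theta_i v_i$ with $\|\theta\|_2 \le 1/M$, and set $j_\ast := \min\{i : \theta_i \ne 0\}$. Since $v_i \in F_{j_\ast - 1}$ for every $i \ge j_\ast$, one has $w \in F_{j_\ast - 1}$. If additionally $w \in W$, then minimality of $v_{j_\ast}$ gives $\|w\|_2 \ge T_{j_\ast} \ge T_1$, while orthogonality and the stopping rule $T_r < M T_1$ give
\begin{align*}
\|w\|_2^2 \;=\; \sum_i \theta_i^2\, T_i^2 \;\le\; T_r^2\,\|\theta\|_2^2 \;<\; M^2 T_1^2/M^2 \;=\; T_1^2,
\end{align*}
a contradiction, so $w \notin W$ and (ii) holds. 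If instead the procedure halts at some stage $j_0 \le r$ because $F_{j_0-1}\cap W = \emptyset$, then $F_{j_0-1}$ has dimension $l - j_0 + 1 \ge l - r + 1$, and any of its subspaces of dimension $l - r$ establishes alternative (2).

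The remaining case $T_{j_0} \ge M\,T_1$ is the main obstacle. My plan here is to apply the lemma recursively to the data $(F_{j_0-1}, W)$ with the reduced parameters $l' := l-j_0+1$ and $r' := r-j_0+1$; these satisfy $l'-r' = l-r$ and $r' < l'$. If the recursion returns alternative (2'), we inherit a subspace $F \subset F_{j_0-1}\subset E$ of dimension $l-r$ with $F\cap W = \emptyset$, finishing the proof. Otherwise it returns vectors $v_{j_0}',\ldots,v_r' \in F_{j_0-1}\cap W$ satisfying (1'), which I concatenate with $v_1,\ldots,v_{j_0-1}$: the block structure (first block exactly orthogonal to $F_{j_0-1}$, second block $(1/8)$-almost orthogonal inside $F_{j_0-1}$) yields $(1/8)$-almost orthogonality of the full $r$-tuple. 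Verifying (ii) for the concatenation is the technical heart: decomposing $w = w_< + w_\ge$ with $w_< \in V_{j_0-1} := \textnormal{span}(v_1,\ldots,v_{j_0-1})$ and $w_\ge \in F_{j_0-1}$, the ratio bound $\|v_i\|_2 < M T_1$ for $i < j_0$ forces $\|w_<\|_2 < T_1$; when $w_<$ vanishes, the recursive small-combination property rules out $w_\ge \in W$, and otherwise the minimality of $v_{j_\ast}$ on $F_{j_\ast - 1}$ (combined with Pythagoras and the ratio $\max_i \|v_i'\|_2/T_{j_0} < 20\sqrt{r'}$ supplied by the recursion) closes the contradiction. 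I expect the balancing of constants across the scale change from $T_1$ to $T_{j_0}$ to be the subtle point, possibly requiring a tightening of $M$ by a constant factor or carrying a ratio bound as part of a strengthened inductive statement.
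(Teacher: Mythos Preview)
The present paper does not prove this lemma; it is quoted from Rudelson's work and invoked as a black box, so there is no in-paper argument to compare your proposal against.

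Your first two branches are fine: if the greedy runs for $r$ steps with $T_r<MT_1$, the orthogonal system satisfies (i) and (ii) exactly as you argue, and early termination with $F_{j_0-1}\cap W=\emptyset$ delivers alternative~(2). The recursion branch, however, has a genuine gap that is structural rather than a matter of constants. The concatenated tuple $(v_1,\dots,v_{j_0-1},v'_{j_0},\dots,v'_r)$ can violate (ii) even when both blocks behave well individually. Concretely, take $n=l=3$, $r=2$, and $W=\{e_1,\,1000\,e_2,\,\tfrac{1}{100}e_1+e_2\}$. Your greedy picks $v_1=e_1$, $T_1=1$; then $F_1=e_1^\perp$ meets $W$ only at $1000\,e_2$, so $T_2=1000>20\sqrt{2}=M$ and you recurse on $F_1$ with $r'=1$. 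The recursion legitimately returns $v'_2=1000\,e_2$ satisfying $(1')$. But for the concatenated pair $(e_1,1000\,e_2)$ the coefficient vector $\theta=(\tfrac{1}{100},\tfrac{1}{1000})$ has $\|\theta\|_2\approx 0.01<1/(20\sqrt 2)$ while $\theta_1 v_1+\theta_2 v'_2=\tfrac{1}{100}e_1+e_2\in W$, so (ii) fails. The underlying reason is that the recursive output lives at scale $T_{j_0}\ge MT_1$; a mixed combination with a tiny low-index coefficient and a moderate high-index coefficient then has norm of order $T_{j_0}\gg T_{j_\ast}$, which defeats the minimality argument, yet the nonzero low-index part blocks any appeal to the recursive $(\mathrm{ii}')$. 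Since $T_{j_0}/T_1$ is unbounded, no tightening of $M$ or added ratio clause in a strengthened induction reconciles the two scales; the splicing strategy cannot be made to work, and a different mechanism is needed for the scale-jump case.
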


The next result shows that it is unlikely that the kernel of a random matrix with i.i.d. centered non-constant subgaussian variables contains a large almost orthogonal system of compressible vectors.

\begin{mylem}[Proposition 4.2 in \cite{Rudelson_Annals}]\label{Lem_Rudelson_subgaussian_compressible}
	Let $l, n$ be positive integers such that $l\le n/2$ and let $M$ be an $(n-l)\times n$ matrix whose entries are i.i.d. centered non-constant subgaussian variables $\xi_{ij}$. There exsits $\tau >0$ such that the probability that there exists a $\frac{1}{4}$-almost orthogonal $r$-tuple $v_{1}, \cdots, v_{r}\in\textnormal{Comp}(\tau^{2}, \tau^{4})$ with $r\le \tau^{3}n$ and 
	\begin{align}
		\Vert Mv_{j}\Vert_{2}\le \tau \sqrt{n}, \quad\forall j\in [r]\nonumber
	\end{align}
is less than $e^{-crn}$. Here, the positive constants $\tau, c$ appeared above depend only on the subgaussian moment of $\xi_{11}$.
\end{mylem}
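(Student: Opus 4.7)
The plan is a net argument for compressible vectors combined with a dimension-sensitive small-ball estimate for projections of i.i.d.\ non-constant subgaussian vectors, following the random-rounding framework of Livshyts \cite{Livshys_JAM} and Rudelson \cite{Rudelson_Annals}. The three main ingredients are: (i) a discrete net $\mathcal{N}$ approximating $\textnormal{Comp}(\tau^2,\tau^4)\cap S^{n-1}$, whose cardinality is controlled by the sparsity of compressible vectors; (ii) a single-tuple large-deviation estimate showing that for each fixed $r$-tuple in $\mathcal{N}^r$, the event of the statement has probability at most $e^{-c_0 rn}$; (iii) a union bound that succeeds once $\tau$ is taken small enough.

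For the net, the sparse approximation of compressible vectors yields $|\mathcal{N}|\le \exp(O(\tau^2 n\log(1/\tau)))$ for an appropriate mesh (the mesh, together with random rounding, is engineered to balance approximation accuracy against cardinality). Conditioning on the standard event $\{\|M\|_{\mathrm{op}}\le C_0\sqrt n\}$, which fails with probability at most $e^{-c_1 n}\ll e^{-crn}$ since $r\le \tau^3 n$, any $\tfrac14$-almost-orthogonal tuple $(v_1,\ldots,v_r)$ satisfying $\|Mv_j\|_2\le \tau\sqrt n$ admits a nearby tuple $(v'_1,\ldots,v'_r)\in\mathcal{N}^r$ that retains the norm bound $\|Mv'_j\|_2\le 2\tau\sqrt n$ and whose matrix $V'$ still has $s_r(V')$ and $s_1(V')^{-1}$ bounded below by a positive absolute constant.

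For a fixed such tuple, set $E=V'\mathbb{R}^r$, and write $V'=UT$ with $U\in\mathbb{R}^{n\times r}$ having orthonormal columns spanning $E$. Then $s_j(V')=s_j(T)$, and
\[
s_r(V')^2\,\|MP_E\|_F^2 \;\le\; \|MV'\|_F^2 \;\le\; s_1(V')^2\,\|MP_E\|_F^2,
\]
so the event forces $\sum_i \|P_E\,\textnormal{Row}_i(M)^\top\|_2^2 = \|MP_E\|_F^2 \le C_2 r\tau^2 n$, while the mean of this sum is $(n-l)r\ge rn/2$. The key small-ball input is
\[
\mathbb{P}\bigl(\|P_E X\|_2\le c_*\sqrt r\bigr)\le q^r,
\]
valid for some $q\in(0,1)$, every $r$-dim subspace $E$, and any random vector $X$ with i.i.d.\ non-constant subgaussian coordinates (obtained via random rounding). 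A Chernoff/Bernoulli counting argument then shows that the event forces at least $n/4$ rows of $M$ to satisfy $\|P_E\,\textnormal{Row}_i(M)^\top\|_2\le c_*\sqrt r$, so its probability is bounded by $\binom{n-l}{n/4}q^{rn/4}\le e^{-c_3 rn}$. A union bound over $\mathcal{N}^r$ gives total probability at most $\exp(Cr\tau^2 n\log(1/\tau)-c_3 rn)+e^{-c_1 n}\le e^{-crn}$ once $\tau$ is small enough.

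The principal obstacle is establishing the small-ball bound $\mathbb{P}(\|P_E X\|_2\le c_*\sqrt r)\le q^r$ in this generality. Subgaussianity alone does not furnish bounded density, so Lemma \ref{Lem_smallballprobability_bounded_density} is not directly applicable; nor does Lemma \ref{Lem_Rudelson_LCD_distance} apply cleanly, because the LCD $D^{(2)}_{\alpha,\gamma}(E)$ can fail to be large when $E$ is spanned by compressible vectors. The resolution is the random rounding technique of Livshyts (refined by Rudelson \cite{Rudelson_Annals}): replace $X$ by a discrete approximation with controlled L\'{e}vy concentration, for which bounded-density-type estimates do apply, and then compare. A secondary technicality lies in the mesh choice for $\mathcal{N}$: naive volumetric nets at the mesh $\sim 1/\sqrt r$ needed to preserve almost orthogonality would produce a $\log n$ blow-up in the exponent; random rounding sidesteps this by discretizing coarsely while still controlling the relevant events.
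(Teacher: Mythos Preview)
The paper does not prove this lemma; it is quoted directly as Proposition~4.2 of \cite{Rudelson_Annals}. Your sketch does follow the correct architecture of Rudelson's argument (net over compressible tuples, row-wise small-ball for the projection onto their span, Chernoff counting over rows, union bound), but it contains a genuine gap at a decisive point.

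The assertion that the failure of $\{\|M\|_{\mathrm{op}}\le C_0\sqrt n\}$ can be absorbed because ``$e^{-c_1 n}\ll e^{-crn}$ since $r\le \tau^3 n$'' is false in the regime that matters. When $r$ is of order $\tau^3 n$ one has $crn$ of order $n^2$, and $e^{-c_1 n}$ is vastly larger than $e^{-c\tau^3 n^2}$; no choice of the output constant $c>0$ (independent of $n$) rescues this. This is exactly the obstruction that random rounding is designed to remove: rather than bounding $\|M(v-v')\|_2$ by $\|M\|_{\mathrm{op}}\|v-v'\|_2$, one rounds the \emph{net vectors} $v_j$ coordinatewise at random and controls the approximation error through Hilbert--Schmidt norms of (sub)matrices of $M$, which concentrate with probability $1-e^{-cn^2}$ --- small enough to be absorbed for every $r\le \tau^3 n$. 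You invoke random rounding, but misplace it: you propose applying it to the random row $X$ in order to manufacture a small-ball bound, which is not how the technique functions and does not yield what you claim.

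Separately, the small-ball estimate $\mathbb{P}(\|P_E X\|_2\le c_\ast\sqrt r)\le q^r$ is not an obstacle at all in the centered subgaussian setting. It follows immediately from Hanson--Wright concentration for the quadratic form $\|P_E X\|_2^2=X^\top P_E X$: the mean is $\sigma^2 r$, the lower tail at deviation $\sigma^2 r/2$ is at most $\exp(-c'r)$, and no LCD or rounding is needed. Thus the ``principal obstacle'' you flag is spurious, while the actual obstacle --- the additive $e^{-c_1 n}$ from the operator-norm conditioning --- is the step you dismissed incorrectly.
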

The next result  bounds the probability that the kernel of a random matrix with i.i.d. centered non-constant subgaussian variables contains an almost orthogonal system of incompressible vectors.

\begin{mylem}[Proposition 5.1 in \cite{Rudelson_Annals}]\label{Lem_Rudelson_subgaussian_incompressible}
	Let $M$ be an $(n-l)\times n$ matrix with i.i.d. centered non-constant subgaussian entries $\xi_{ij}$. Let $\rho=\rho(\tau)$ be a positive constant, where $\tau$ is a constant appeared in Lemma \ref{Lem_Rudelson_subgaussian_compressible}. Assume that $r\le l\le \rho\sqrt{n}/2$.
	
	Consider the event $\mathcal{E}_{r}$ that there exist vectors $v_{1}, \cdots, v_{r}\in\textnormal{Ker}(W)$ having the following properties:
	\begin{itemize}
		\item $\frac{\tau}{8}\sqrt{n}\le \Vert v_{j}\Vert_{2}\le \exp\big(\frac{c\rho^{2}n}{l}   \big)$ for all $j\in [r]$, where $c>0$ depending only on the subgaussian moment of $\xi_{11}$;
		\item $\textnormal{span}\{v_{1}, \cdots, v_{r}\}\cap S^{n-1}\subset \textnormal{Incomp}(\tau^{2}, \tau^{4})$;
		\item The vectors $v_{1}, \cdots, v_{r}$ are $\frac{1}{8}$-almost orthogonal;
		\item $\textnormal{dist}(v_{j}, \mathbb{Z}^{n})\le \rho\sqrt{n}$ for $j\in [r]$;
		\item The $n\times r$ matrix $V=(v_{1}, \cdots, v_{r})$ satisfies 
		\begin{align}
			\textnormal{dist}(V\theta, \mathbb{Z}^{n})>\rho\sqrt{n}\nonumber
		\end{align}
	for all $\theta\in\mathbb{R}^{r}$ such that $\Vert \theta\Vert_{2}\le \frac{1}{20\sqrt{r}}$ and $\Vert V\theta\Vert_{2}\ge \frac{\tau}{8}\sqrt{n}$.
	\end{itemize}
Then we have $\textsf{P}\{\mathcal{E}_{r}  \}\le e^{-rn}.$
\end{mylem}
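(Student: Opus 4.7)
The plan is to refine Nguyen's argument by enlarging the good event on which the conditional LCD of $H^{\perp}$ is large, so that its complement has super-exponential probability $e^{-c_{1}kn}$ rather than merely $e^{-cn}$. First, using the variational formula \eqref{Eq_intro_def_singularvalues} together with the restricted-invertibility Lemma \ref{Lem_Naor_restricted_invertibility}, reduce the event $\{s_{n-k+1}(M)\le\varepsilon/\sqrt n\}$ to a joint small-ball event for the column-to-subspace distances. Writing $X_{1},\ldots,X_{n}$ for the columns of $M$ and $H=\textnormal{span}(X_{k+1},\ldots,X_{n})$, one extracts a well-conditioned $k\times k$ minor from $P_{H^{\perp}}X_{1},\ldots,P_{H^{\perp}}X_{k}$ and thereby bounds $s_{n-k+1}(M)$ from below (up to polynomial factors in $n$ and $k$) by a product involving the distances $\textnormal{dist}(X_{i},H)$, $i\le k$. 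Since $X_{1},\ldots,X_{k}$ are independent of $H$ and of each other, conditioning on $H$ and tensorizing reduces the estimate to controlling one-column small-ball probabilities $\textsf{P}\{\textnormal{dist}(X_{i},H)\le t\mid H\}$ on a high-probability event of $H$.

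Next, construct the good event $\Omega_{2}$ via the set-partition Lemma \ref{Lem_set_partition}. Set $E:=H^{\perp}$, of dimension $k$ almost surely. Define the bad closed set
\begin{align*}
W=\bigcup_{t>0}t\cdot\textnormal{Comp}(\tau^{2},\tau^{4})\;\cup\;\Big\{v\neq 0:\,v/\Vert v\Vert_{2}\in\textnormal{Incomp}(\tau^{2},\tau^{4}),\;D^{(2)}_{\alpha,\gamma}(v/\Vert v\Vert_{2})<C\sqrt n\,e^{c_{3}n/k}\Big\}.
\end{align*}
Apply Lemma \ref{Lem_set_partition} to $E$ with parameter $r\sim(1-2\gamma)k$: alternative (1) yields an almost-orthogonal $r$-tuple in $E\cap W$ satisfying clause (ii), whose probability is bounded by Lemma \ref{Lem_Rudelson_subgaussian_compressible} on the compressible portion of $W$ and by Lemmas \ref{Lem_Rudelson_annals_U_theta} and \ref{Lem_Rudelson_subgaussian_incompressible} on the incompressible-small-LCD portion, summing to at most $e^{-crn}\le e^{-c_{1}kn}$; alternative (2) yields a subspace $\tilde F\subset E$ of dimension $m:=k-r\gtrsim\gamma k$ with $\tilde F\cap W=\emptyset$, so every unit vector of $\tilde F$ is incompressible with $D^{(2)}_{\alpha,\gamma}\ge C\sqrt n\,e^{c_{3}n/k}$.

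On $\Omega_{2}$, apply Rudelson's LCD bound (Lemma \ref{Lem_Rudelson_LCD_distance}) to $P_{\tilde F}X_{i}$, with $\alpha\asymp\sqrt m$ and the fixed $\gamma$. Since $\tilde F\subset H^{\perp}$ gives $\Vert P_{\tilde F}X_{i}\Vert_{2}\le\textnormal{dist}(X_{i},H)$, this transfers to
\begin{align*}
\textsf{P}\big\{\textnormal{dist}(X_{i},H)\le \varepsilon\,\big|\,H\big\}\le\Big(\frac{C\varepsilon}{\sqrt m}+\frac{C\sqrt m}{\sqrt n}\,e^{-c_{3}n/k}\Big)^{\!m}.
\end{align*}
In the relevant range of $\varepsilon$ the first summand dominates; tensorizing over the $k$ conditionally independent columns produces a product bound of order $(C\varepsilon/\sqrt m)^{mk}$. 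After rescaling the reduction's target to $\varepsilon/\sqrt n$ and using $m\asymp\gamma k$, this collapses to $(C\varepsilon/k)^{\gamma k^{2}}$. Combining with $\textsf{P}(\Omega_{2}^{c})\le e^{-c_{1}kn}$ completes the proof.

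The hard part will be the parameter-matching in alternative (1): clause (ii) of Lemma \ref{Lem_set_partition}, namely $\sum\theta_{i}v_{i}\notin W$ whenever $\Vert\theta\Vert_{2}\le 1/(20\sqrt r)$, must translate directly into the hypothesis $\textnormal{dist}(V\theta,\mathbb{Z}^{n})>\rho\sqrt n$ required as input by Lemma \ref{Lem_Rudelson_subgaussian_incompressible}. This forces the LCD threshold used in defining $W$ to be calibrated against $\rho$ and against the norm scale $\tfrac{\tau}{8}\sqrt n$ also appearing in Lemma \ref{Lem_Rudelson_subgaussian_incompressible}, while $r$ must be tuned so that both $m=k-r\ge\gamma k$ and $e^{-crn}\le e^{-c_{1}kn}$ hold simultaneously for every fixed $\gamma<1/2$. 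Threading these constraints — so that the dimension $\gamma k$ is preserved, the exponential rate $kn$ is delivered, and the clause (ii) output is actually usable as a Lemma \ref{Lem_Rudelson_subgaussian_incompressible} hypothesis — is the technical core of the argument.
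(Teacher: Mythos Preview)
Your proposal is not a proof of the stated lemma at all. Lemma~\ref{Lem_Rudelson_subgaussian_incompressible} asserts a super-exponential bound $\textsf{P}\{\mathcal{E}_{r}\}\le e^{-rn}$ for the probability that $\textnormal{Ker}(M)$ contains an almost-orthogonal system of incompressible vectors with the listed arithmetic and metric properties. What you have written is instead a proof sketch of Theorem~\ref{Theo_Main2}, the main result of the paper, and you explicitly invoke Lemma~\ref{Lem_Rudelson_subgaussian_incompressible} as a black-box ingredient (``alternative (1) \ldots\ is bounded \ldots\ by Lemmas~\ref{Lem_Rudelson_annals_U_theta} and~\ref{Lem_Rudelson_subgaussian_incompressible}''). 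You are using the very statement you were asked to prove.

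Note also that the paper itself does not prove this lemma: it is quoted verbatim as Proposition~5.1 of Rudelson's paper \cite{Rudelson_Annals} and placed in the preliminaries section. A genuine proof would require the random-rounding discretization and the anti-concentration machinery for incompressible vectors developed in \cite{Rudelson_Annals}; none of that appears in your write-up. Your outline is, in fact, a reasonable high-level summary of how the paper proves Theorem~\ref{Theo_Main2} (with some differences: the paper applies Lemma~\ref{Lem_set_partition} twice, first with $W=\textnormal{Comp}(\tau^{2},\tau^{4})$ and then with the set $W_{1}$, rather than once with a composite $W$), but it addresses the wrong target.
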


\subsection{The smallest singular value }
In this subsection, we shall introduce a deviation inequality of $s_{n}(M)$, where $M$ is a random matrix as in Theorem \ref{Theo_Main1}.
\begin{mylem}[Corollary 2 in \cite{Livshys_JAM}]\label{Lem_smallest_singular_heavy_tailed_entries}
	Let $M=(\xi_{ij})$ be an $n\times n$ random matrix, where $\xi_{ij}$ are i.i.d. variables satisfying the condition \eqref{condition_in_Theo1}. Then for every $\varepsilon>0$, we have
	\begin{align}
		\textsf{P}\big\{s_{n}(M)\le  \frac{\varepsilon}{\sqrt{n}}  \big\}\le C\varepsilon+e^{-cn},\nonumber
	\end{align}
where $C$ and $c$ are positive constants depending only on parameters $a, b$ and $K$ in Theorem \ref{Theo_Main1}.
\end{mylem}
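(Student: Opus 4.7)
The plan is to follow the Rudelson--Vershynin invertibility framework, as adapted to the heavy-tailed regime by Livshyts under the hypothesis (\ref{condition_in_Theo1}), which I shall take to mean a bounded L\'{e}vy concentration function $\mathcal{L}(\xi,a)<b$ together with the integrability $\textsf{E}\Vert X\Vert_{2}\le K\sqrt{n}$ (where $X$ is a column of $M$). The starting point is the splitting $S^{n-1}=\textnormal{Comp}(\delta,\rho)\cup\textnormal{Incomp}(\delta,\rho)$ for constants $\delta,\rho$ depending only on $a,b,K$, after which $s_{n}(M)=\inf_{x\in S^{n-1}}\Vert Mx\Vert_{2}$ is bounded below on each piece separately.

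For the incompressible part I would use the standard invertibility-via-distance inequality: denoting by $X_{j}$ the $j$-th column of $M$ and by $H_{j}=\textnormal{span}\{X_{i}:i\ne j\}$, for $x\in\textnormal{Incomp}(\delta,\rho)$ any coordinate of magnitude $\gtrsim 1/\sqrt{n}$ yields $\Vert Mx\Vert_{2}\ge \rho\,n^{-1/2}\,\textnormal{dist}(X_{j},H_{j})$. A Markov-type averaging over $j$ reduces the whole problem to the one-column estimate
\begin{align}
\textsf{P}\{\textnormal{dist}(X_{1},H_{1})\le \varepsilon\}\le C\varepsilon+e^{-cn}.\nonumber
\end{align}
Since $H_{1}$ is independent of $X_{1}$, I would condition on the remaining $n-1$ columns. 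Lemma \ref{Lem_prob_LCD_1} (taken with $m=1$) gives $D^{(1)}_{c_{1}\sqrt{n},c_{2}}(H_{1}^{\perp})\ge C_{0}\sqrt{n}\,e^{c_{3}n}$ on an event of probability at least $1-e^{-c_{4}n}$. On that event Lemma \ref{Lem_distance_to_general_subspace_random_rounding}, applied with $m=1$, $v=0$ and $\alpha=c_{1}\sqrt{n}$, produces the desired linear-in-$\varepsilon$ bound $C_{1}\varepsilon+C_{2}e^{-c_{5}n}$; the range $\varepsilon<\sqrt{n}/D^{(1)}_{\alpha,\gamma}(H_{1}^{\perp})$ is negligible because that threshold is exponentially small. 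Integrating out the conditioning closes this branch.

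For the compressible part I would cover $\textnormal{Comp}(\delta,\rho)$ by an $\varepsilon$-net: first choose a support of size at most $\delta n$, then cover unit vectors supported there, yielding a net of cardinality at most $\binom{n}{\lceil \delta n\rceil}(C/\rho)^{\delta n}\le e^{c_{0}(\delta,\rho)n}$ with $c_{0}(\delta,\rho)\to 0$ as $\delta,\rho\to 0$. For each net point $x$, the hypothesis $\mathcal{L}(\xi,a)<b$ combined with a tensorization lemma gives $\textsf{P}\{\Vert Mx\Vert_{2}\le c\sqrt{n}\}\le e^{-c'n}$. After transferring the net estimate back to $\textnormal{Comp}(\delta,\rho)$ (which requires control of $\Vert M\Vert$), one concludes $\textsf{P}\{\inf_{x\in\textnormal{Comp}(\delta,\rho)}\Vert Mx\Vert_{2}\le c\sqrt{n}\}\le e^{-c''n}$, which is absorbed into the $e^{-cn}$ term.

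The main obstacle is precisely the net-transfer step above under the weak second-moment hypothesis, since we do not have an exponential tail for $\Vert M\Vert$ (which is what made the argument easy in the subgaussian setting). The remedy, due to Livshyts, is the random rounding technique: instead of demanding $\Vert M\Vert\lesssim \sqrt{n}$ uniformly, one rounds a generic compressible $x$ to a random nearby lattice/net point so that the increment in $\Vert Mx\Vert_{2}$ is controlled in expectation using only the second-moment bound, while the net probability absorbs the overhead. Executing this carefully, with the right choice of rounding scale so that the contributions combine into $C\varepsilon+e^{-cn}$ (and in particular preserve the linear $\varepsilon$-dependence inherited from the incompressible step), is the delicate technical heart of the argument.
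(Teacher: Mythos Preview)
The paper does not prove this lemma; it is quoted as Corollary~2 of Livshyts \cite{Livshys_JAM} and used as a black box, so there is no in-paper argument to compare against. That said, your outline is a faithful summary of how Livshyts's proof actually proceeds: the compressible/incompressible split, the invertibility-via-distance reduction on the incompressible side (for which Lemmas~\ref{Lem_prob_LCD_1} and~\ref{Lem_distance_to_general_subspace_random_rounding}, themselves imported from \cite{Livshys_JAM}, are exactly the ingredients needed at $m=1$), and---the essential novelty relative to the subgaussian case---the random-rounding net on the compressible side to bypass any operator-norm control. The only substantive work not already packaged in the lemmas of Section~2 is the random-rounding construction for compressible vectors, which you correctly flag as the technical heart; the rest is assembly.
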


\section{Proof of our main result.}

\begin{proof}[Proof of Theorem \ref{Theo_Main2}]

We only need to prove Theorem \ref{Theo_Main2} when $n$ is large enough. In particular, assume that Theorem \ref{Theo_Main2} is valid when $n\ge n_{0}$. Note that
\begin{align}
	\lim_{\varepsilon\to 0}\textsf{P}\big\{s_{n-k+1}(M)\le \frac{\varepsilon}{\sqrt{n}}  \big\}=\textsf{P}\big\{s_{n-k+1}(M)=0  \big\}\le e^{-ckn},\nonumber
\end{align}
where the last inequality is due to \eqref{Intro_Rudelson_rank_result}. Hence, for any fixed $n$, there exists an $\varepsilon(n, n_{0})$ such that, when $\varepsilon\le \varepsilon(n, n_{0})$
\begin{align}
	\textsf{P}\big\{s_{n-k+1}(M)\le \frac{\varepsilon}{\sqrt{n}}  \big\}-\textsf{P}\big\{s_{n-k+1}(M)=0  \big\}\le e^{-cn_{0}^{2}}.\nonumber
\end{align}
Let $\varepsilon(n_{0})=\min\{\varepsilon(1, n_{0}), \cdots,  \varepsilon(n_{0}-1, n_{0})  \}$. Then we have for $n<n_{0}$ and $\varepsilon\le \varepsilon(n_{0})$
\begin{align}
	\textsf{P}\big\{s_{n-k+1}(M)\le \frac{\varepsilon}{\sqrt{n}}  \big\}\le 2e^{-ckn}.\nonumber
\end{align}
For the case $n<n_{0}$ and $\varepsilon>\varepsilon(n_{0})$, the desired bound is trivial.
Hence, it is enough to prove Theorem \ref{Theo_Main2} when $n$ is large enough.

Recall the min-max principle \eqref{Eq_intro_def_singularvalues}. So if $s_{n-k+1}(M)\le \frac{\varepsilon}{\sqrt{n}}$, then there exist $k$ orthogonal unit vectors $z_{1}, \cdots, z_{k}$ such that
\begin{align}
	\Vert Mz_{i}\Vert_{2} \le \frac{\varepsilon}{\sqrt{n}},\quad 1\le i\le k.\nonumber
\end{align} 

Let $Z^{\top}=(z_{1},\cdots, z_{k})$ be the $n\times k$ matrix generated by the vectors $z_{i}$. By virtue of Lemma \ref{Lem_Naor_restricted_invertibility}, we can extract from $Z$ a well-conditioned minor. In particular, for any $1\le l\le k-1$, Lemma \ref{Lem_Naor_restricted_invertibility} yields that  there exist $l$ distinct indices $i_{1}, \cdots, i_{l}\in [n]$ such 
that 
\begin{align}
	s_{l}(Z_{\{i_{1},\cdots, i_{l}\}})^{-1}&\le C_{1}\min_{r\in \{l+1,\cdots, k \}}\sqrt{\frac{rn}{(r-l)\sum_{i=r}^{k}s_{i}^{2}(Z)}}\nonumber\\
	&\le C_{1} \min_{r\in \{l+1,\cdots, k \}}\sqrt{\frac{rn}{(r-l)(k-r+1)}},\nonumber
\end{align}
where the second inequality is due to that $ZZ^\top$ is an  indentity matrix. Hence, we have by taking $r=\lceil (k+l)/2\rceil$
\begin{align}\label{Eq_proof1_boundfors_{l}}
	s_{l}(Z_{\{i_{1},\cdots, i_{l}\}})^{-1}\le C_{1}\sqrt{\frac{kn}{(k-l)^{2}}}.
\end{align}

Let $A$ be the $l\times k$ matrix $\big( Z_{\{i_{1}, \cdots, i_{l}\}}  \big)^{\top}$ and $A^\prime=\big( Z_{\{i_{l+1}, \cdots, i_{n}\}}  \big)^{\top}$. For short, let $\boldsymbol{c}_{i}:=\text{Col}_{i}(M)$ be the $i$-th column of $M$. Then
\begin{align}
	B:=MZ^{\top}=(\boldsymbol{c}_{i_{1}},\cdots, \boldsymbol{c}_{i_{l}})A+(\boldsymbol{c}_{i_{l+1}},\cdots, \boldsymbol{c}_{i_{n}})A^{\prime}.\nonumber
\end{align}
Let $\hat{A}=A^\top(AA^\top)^{-1}$ be the right inverse matrix of $A$, i.e. $A\hat{A}$ is an identity matrix. Hence, we have
\begin{align}
	B\hat{A}=(\boldsymbol{c}_{i_{1}}, \cdots, \boldsymbol{c}_{i_{l}})+(\boldsymbol{c}_{i_{l+1}}, \cdots, \boldsymbol{c}_{i_{n}})A^\prime \hat{A}.\nonumber
\end{align}
Recalling the definitions of the largest and smallest singular values (see \eqref{Eq_intro_def_singularvalues}), we have for any $x\in S^{l-1}$
\begin{align}
	1=s_{1}(A\hat{A})\ge \Vert A\hat{A}x\Vert_{2}\ge s_{l}(A)\Vert \hat{A}x\Vert_{2}.\nonumber
\end{align}
Hence, we have by \eqref{Eq_proof1_boundfors_{l}}
\begin{align}
	s_{1}(\hat{A})\le s_{l}(A)^{-1}\le C_{1}\sqrt{\frac{kn}{(k-l)^{2}}}.\nonumber
\end{align}
Note that $$\Vert\text{Col}_{i}(B)\Vert_{2}\le \varepsilon/\sqrt{n},\quad 1\le i\le k.$$ Then, we have
\begin{align}
	\Vert B\hat{A}\Vert_{F}\le s_{1}(\hat{A})\Vert B\Vert_{F}\le C_{1}\sqrt{\frac{kn}{(k-l)^{2}}}\sqrt{\frac{k\varepsilon^{2}}{n}}\le C_{1}\frac{k\varepsilon}{k-l}.\nonumber
\end{align}

Let $H$ be the linear space spanned by $\boldsymbol{c}_{i_{l+1}}, \cdots, \boldsymbol{c}_{i_{n}}$. Let $P_{H^\perp}$ be the orthogonal projection in $\mathbb{R}^{n}$ onto $H^\perp$,  the orthogonal complement sapce of $H$. Then we have
\begin{align}
	P_{H^\perp}B\hat{A}=P_{H^\perp}(\boldsymbol{c}_{i_1}, \cdots, \boldsymbol{c}_{i_l}),\nonumber
\end{align}
which implies that
\begin{align}
	\Vert P_{H^{\perp}}\boldsymbol{c}_{i_{1}}\Vert_{2}^{2}+\cdots+\Vert P_{H^{\perp}}\boldsymbol{c}_{i_{l}}\Vert_{2}^{2}=\Vert P_{H^\perp}B\hat{A}\Vert_{F}^{2}\le \Vert B\hat{A}\Vert^{2}_{F}
	\le \Big( \frac{C_{1}k\varepsilon}{k-l}  \Big)^{2}.\nonumber
\end{align}

Up to now, we have proved for
$\varepsilon>0$ and $1\le l\le k-1$
\begin{align}\label{Eq_Proof2_trans}
	\textsf{P}\{s_{n-k+1}(M)\le \frac{\varepsilon}{\sqrt{n}}  \}\le \binom{n}{k}\binom{n}{l}\textsf{P}\Big\{ \sum_{m=1}^{l}\Vert P_{H^{\perp}}\boldsymbol{c}_{i_{m}}\Vert_{2}^{2}\le \Big(\frac{C_{1}k\varepsilon}{k-l}\Big)^{2}  \Big\}.
\end{align}

Let $\tau$ be the constant as in Lemma \ref{Lem_Rudelson_subgaussian_compressible} and denote
\begin{align}
	W_{0}=\textnormal{Comp}(\tau^{2}, \tau^{4}).\nonumber
\end{align}
Then, using Lemma \ref{Lem_set_partition} with $W=W_{0}, E=H^\perp$ and $r=l/4$, we obtain that at least one of the events described in (1) and (2) of Lemma \ref{Lem_set_partition} holds. We denote these events $\mathcal{E}_{1}$ and $\mathcal{E}_{2}$ respectively.

When $\mathcal{E}_{1}$ occurs, there exist $l/4$ $\big(1/8\big)$-almost orthogonal vectors in $W_{0}\cap H^\perp$. Hence, we have by Lemma \ref{Lem_Rudelson_subgaussian_compressible}
\begin{align}
	\textsf{P}\{\mathcal{E}_{1}  \}\le e^{-\frac{cln}{4}}.\nonumber
\end{align}

Assume that $\mathcal{E}_{2}$ holds. Consider the subspace $F\subset H^\perp$ such that 
\begin{align}
	\textnormal{dim}(F)=\frac{3}{4}l,\quad F\cap W_{0}=\emptyset.\nonumber
\end{align}
Let $\rho$ be the constant as in Lemma \ref{Lem_Rudelson_subgaussian_incompressible}. Define the following set:
\begin{align}
	W_{1}=\Big\{v\in F: \frac{\tau}{8}\sqrt{n}\le \Vert v\Vert_{2}\le \exp\big(\frac{c\rho^{2}n}{l} \big),\quad  \textnormal{dist}(v, \mathbb{Z}^{n})\le\rho \sqrt{n}  \Big\}.\nonumber
\end{align}
Applying Lemma \ref{Lem_set_partition} with $W=W_{1}, E=F$ and $r=l/4$, we  conclude that at least one of the following events holds:
\begin{itemize}
	\item[(1)] There exist vectors $v_{1}, \cdots, v_{l/4}\in F\cap W_{1}$ such that
	
	(i) $v_{1}, \cdots, v_{l/4}$ are $\big( \frac{1}{8} \big)$-almost orthogonal;
	
	(ii) For any $\theta=(\theta_{1},\cdots, \theta_{l/4})$, 
	\begin{align}
		\sum_{i=1}^{l/4}\theta_{i}v_{i}\notin W_{1},\,\,\,\text{when}\,\,\, \Vert\theta\Vert_{2}\le \frac{1}{20\sqrt{l/4}}.\nonumber
	\end{align} 
	
	\item[(2)]    There exists a subspace $\tilde{F}\subset F$ with $\textnormal{dim}(\tilde{F})=l/2$ satisfying $\tilde{F}\cap W_{1}=\emptyset$.
\end{itemize}
Denote the above events $\mathcal{V}_{1}$ and $\mathcal{V}_{2}$ respectively. Lemma \ref{Lem_Rudelson_subgaussian_incompressible} yields that
\begin{align}
	\textsf{P}\{\mathcal{V}_{1}  \}\le \exp(-\frac{ln}{4}).\nonumber
\end{align}

We next show that if the event $\mathcal{V}_{2}$ holds, 
\begin{align}
	D_{c_{1}\sqrt{l}, \tau^{4}}^{(2)}(\tilde{F})\ge \exp\big(\frac{c\rho^{2}n}{l}  \big),\nonumber
\end{align}
where $c_{1}$ is a positive constant determined in \eqref{Eq_proof2_c1} below.
Denote $S: \mathbb{R}^{l/2}\to \mathbb{R}^{n}$ an isometric embedding such that $S\mathbb{R}^{l/2}=\tilde {F}$. Then we have by Definition \ref{Def_lcd2}
\begin{align}
	D_{c_{1}\sqrt{l}, \tau^{4}}^{(2)}(\tilde{F})=D_{c_{1}\sqrt{l}, \tau^{4}}^{(2)}(S^{\top}).\nonumber
\end{align}
Let $\theta\in\mathbb{R}^{l/2}$ be a vector satisfying
\begin{align}
	\textnormal{dist}(S\theta, \mathbb{Z}^{n})<c_{1}\sqrt{l}\sqrt{\log_{+}\frac{\tau^{4}\Vert \theta\Vert_{2}}{c_{1}\sqrt{l}}}.\nonumber
\end{align}
Note that
\begin{align}
	S\mathbb{R}^{l/2}\cap S^{n-1}\subset F\cap S^{n-1}\subset \textnormal{Incomp}(\tau^{2}, \tau^{4}).\nonumber
\end{align}
Hence, Lemma \ref{Lem_Rudelson_annals_U_theta} yields that
\begin{align}
	\Vert \theta\Vert_{2}=\Vert U\theta\Vert_{2}\ge \frac{\tau\sqrt{n}}{2}.\nonumber
\end{align}
On the other hand, $\Vert \theta\Vert_{2}\le \exp\big(\frac{c\rho^{2}n}{l}  \big)$ implies that, with a proper constant $c_{1}$ and when $n$ is large enough,
\begin{align}\label{Eq_proof2_c1}
	c_{1}\sqrt{l}\sqrt{\log_{+}\frac{\tau^{4}\Vert \theta\Vert_{2}}{c_{1}\sqrt{l}}}\le c_{1}\sqrt{l}\sqrt{\frac{2c\rho^{2}n}{l}}\le \rho\sqrt{n},
\end{align}
and therefore $\textnormal{dist}(S\theta, \mathbb{R}^{n})<\rho\sqrt{n}$. The fact $S\mathbb{R}^{l/2}\cap W_{1}=\emptyset$ yields that
\begin{align}
	\Vert \theta\Vert_{2}=\Vert S\theta\Vert_{2}>\exp\big( \frac{c\rho^{2}n}{l} \big),
\end{align}
implying that
\begin{align}
	D_{c_{1}\sqrt{l}, \tau^{4}}^{(2)}(\tilde{F})\ge \exp\big(\frac{c\rho^{2}n}{l}  \big).\nonumber
\end{align}

Hence, we have 
\begin{align}\label{Eq_proof2_juli}
	&\textsf{P}\Big\{\sum_{m=1}^{l}\Vert P_{H^{\perp}}\boldsymbol{c}_{i_{m}}\Vert_{2}^{2}\le \Big(\frac{C_{1}k\varepsilon}{k-l}\Big)^{2}  \Big\}\nonumber\\
	\le& \textsf{P}\Big\{\sum_{m=1}^{l}\Vert P_{H^{\perp}}\boldsymbol{c}_{i_{m}}\Vert_{2}^{2}\le \Big(\frac{C_{1}k\varepsilon}{k-l}\Big)^{2}, \mathcal{V}_{2}  \Big\}+\textsf{P}\{\mathcal{E}_{1}  \}+\textsf{P}\{\mathcal{V}_{1}  \}\nonumber\\
	\le &\binom{n}{l/2}\textsf{P}\Big\{\sum_{m=1}^{l}\Vert P_{\tilde{F}}\boldsymbol{c}_{i_{m}}\Vert_{2}^{2}\le \Big(\frac{C_{1}k\varepsilon}{k-l}\Big)^{2}, \mathcal{V}_{2}  \Big\}+2\exp(-c_{2}ln),
\end{align}
where $\tilde{F}$ is the subspace of $H^\perp$ described in $\mathcal{V}_{2}$.

Condition on a realization $\omega$ of $\mathcal{V}_{2}$. Lemma \ref{Lem_Rudelson_LCD_distance} yields for $t\ge0$
\begin{align}\label{123}
	\textsf{P}\Big\{\Vert P_{\tilde{F}(\omega)}\boldsymbol{c}_{i_{m}}\Vert_{2}\le t\sqrt{l/2} \Big\}\le (C_{2}t)^{l/2}+\exp(-c_{3}n).
\end{align}
Note that the random variables $\Vert P_{\tilde{F}(\omega)}\boldsymbol{c}_{i_{1}}\Vert_{2},\cdots, \Vert P_{\tilde{F}(\omega)}\boldsymbol{c}_{i_{l}}\Vert_{2}$ are independent.
 We have for $t> 0$
\begin{align}
	\textsf{P}\Big\{ \sum_{m=1}^{l}\Vert P_{\tilde{F}(\omega)}\boldsymbol{c}_{i_{m}}\Vert_{2}^{2}\le l^{2}t^2  \Big\}&=\textsf{P}\Big\{l^{2}-\frac{1}{t^{2}} \sum_{m=1}^{l}\Vert P_{\tilde{F}(\omega)}\boldsymbol{c}_{i_{m}}\Vert_{2}^{2}\ge 0  \Big\}\nonumber\\
	&\le \textsf{E}\exp\Big(l^{2}-\frac{1}{t^{2}} \sum_{m=1}^{l}\Vert P_{\tilde{F}(\omega)}\boldsymbol{c}_{i_{m}}\Vert_{2}^{2}   \Big).\nonumber\\
	&=e^{l^{2}}\prod_{m=1}^{l}\textsf{E}\exp\Big(-\frac{\Vert P_{\tilde{F}(\omega)}\boldsymbol{c}_{i_{m}}\Vert_{2}^{2}}{t^{2}}  \Big).\nonumber
\end{align}
Note that 
\begin{align}
	\textsf{E}\exp\Big(-\frac{\Vert P_{\tilde{F}(\omega)}\boldsymbol{c}_{i_{m}}\Vert_{2}^{2}}{t^{2}}  \Big)&=\int_{0}^{1}\textsf{P}\big\{ \exp\big(-\frac{\Vert P_{\tilde{F}(\omega)}\boldsymbol{c}_{i_{m}}\Vert_{2}^{2}}{t^{2}}  \big)>s     \big\}\,ds\nonumber\\
	&=\int_{0}^{\infty}2ue^{-u^{2}}\textsf{P}\{ \Vert P_{\tilde{F}(\omega)}\boldsymbol{c}_{i_{m}}\Vert_{2}<tu  \}\,du.\nonumber
\end{align}
Hence, we have by \eqref{123}
\begin{align}
	\textsf{E}\exp\Big(-\frac{\Vert P_{\tilde{F}(\omega)}\boldsymbol{c}_{i_{m}}\Vert_{2}^{2}}{t^{2}}  \Big)\le \big(  \frac{C_{3}t}{\sqrt{l}}  \big)^{l/2}\Gamma(\frac{l}{4}+1)+e^{-c_{4}n}\le (C_{4}t)^{l/2}+e^{-c_{4}n}, \nonumber
\end{align}
where $\Gamma(\cdot)$ is the Gamma function. Hence, we have for $t>0$ and $n$ is large enough
\begin{align}
	\textsf{P}\Big\{ \sum_{m=1}^{l}\Vert P_{\tilde{F}(\omega)}\boldsymbol{c}_{i_{m}}\Vert_{2}^{2}\le l^{2}t^2  \Big\}\le (C_{5}t)^{l^2/2}+e^{-c_{6}nl}.\nonumber
\end{align}
Then, we have for $\varepsilon\ge 0$
\begin{align}
	\textsf{P}\Big\{\sum_{m=1}^{l}\Vert P_{\tilde{F}}\boldsymbol{c}_{i_{m}}\Vert_{2}^{2}\le \Big(\frac{C_{1}k\varepsilon}{k-l}\Big)^{2},\mathcal{V}_{2}  \Big\}\le \big( \frac{C_{6}k\varepsilon}{l(k-l)}\big)^{l^{2}/2}+e^{-c_{6}nl}.\nonumber
\end{align}
By virtue of \eqref{Eq_Proof2_trans} and \eqref{Eq_proof2_juli}, we have for $\varepsilon\ge 0$ and $1\le l\le k-1$
\begin{align}
	\textsf{P}\{s_{n-k+1}(M)\le \frac{\varepsilon}{\sqrt{n}}  \}&\le \binom{n}{k}\binom{n}{l}\binom{n}{l/2}\Big(\big( \frac{C_{7}k\varepsilon}{l(k-l)}\big)^{l^{2}/2}+e^{-c_{6}nl}  \Big)\nonumber\\
	&\le n^{k+l+l/2}\Big(\big( \frac{C_{7}k\varepsilon}{l(k-l)}\big)^{l^{2}/2}+e^{-c_{6}nl} \Big).\nonumber
\end{align}
Note that $\log n\le k\le c\sqrt{n}, \alpha\in (0, 1)$.
Hence we have 
\begin{align}
	n^{1/k}=O(1),\quad n^{k}=o(e^{n}).\nonumber
\end{align}
Let $l=\sqrt{2\gamma}k, \gamma\in(0, 1/2)$ ($l\le k-1$ when $n$ is large enough). We have by adjusting the absolute constants
\begin{align}
	\textsf{P}\{s_{n-k+1}(M)\le \frac{\varepsilon}{\sqrt{n}}  \}\le \big( \frac{C_{8}(\gamma)\varepsilon}{k}\big)^{\gamma k^{2}}+e^{-c_{7}nl},\nonumber
\end{align}
as desired.

\end{proof}

\section{Singular values in non-subgaussian case}
Let $M=(\xi_{ij})_{n\times n}$ be a random matrix and $\xi_{ij}$ be independent copies of $\xi$. In this section, we consider the singular value of $M$ when $\xi$ is not a subgaussian variable. In particular, we consider two cases here: (i). $\xi$ has a bounded L\'{e}vy concentration function; (ii). the density function of $\xi$ is bounded.

\subsection{Bounded L\'{e}vy concentration function case}

\begin{mytheo}\label{Theo_Main1}
	Let $M=(\xi_{ij})_{n\times n}$ be a random  matrix and $\xi_{ij}$ be independent copies of a random variable $\xi$ in $\mathbb{R}$. Assume that
	\begin{align}\label{condition_in_Theo1}
		\mathcal{L}(\xi, a)< b,\quad \textsf{E}\sqrt{\sum_{i\le n}\xi_{i1}^{2}}\le K\sqrt{n},
	\end{align}
	where $b\in(0, 1)$ and $a, K>0$. For any fixed $\alpha, \gamma\in (0, 1)$, we have for $\varepsilon>0$ and $\log n\le k\le n^{\alpha}$
	\begin{align}
		\textsf{P}\{s_{n-k+1}(M)\le \frac{\varepsilon}{\sqrt{n}}  \}\le \Big( \frac{C\varepsilon}{k}\Big)^{\gamma k^{2}}+e^{-cn},\nonumber
	\end{align}
	where $C, c$ are positive constants depending only on $a, b, K, \alpha, \gamma$.
	
\end{mytheo}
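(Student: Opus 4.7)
The plan is to run exactly the restricted invertibility reduction from the proof of Theorem \ref{Theo_Main2} and then to replace the Rudelson compressible/incompressible dichotomy of Lemmas \ref{Lem_Rudelson_subgaussian_compressible}--\ref{Lem_Rudelson_LCD_distance} (which was introduced solely to upgrade $e^{-cn}$ to $e^{-ckn}$ under the subgaussian hypothesis) by the simpler direct combination of the two Livshyts tools Lemma \ref{Lem_distance_to_general_subspace_random_rounding} and Lemma \ref{Lem_prob_LCD_1}. Both of these were formulated precisely under the bounded L\'evy concentration and second-moment hypothesis \eqref{condition_in_Theo1}, so they apply verbatim here, and the target $e^{-cn}$ tail in Theorem \ref{Theo_Main1} matches the probability of their ``bad event''.

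First, the restricted invertibility reduction of Theorem \ref{Theo_Main2} (which uses only the min-max principle and Lemma \ref{Lem_Naor_restricted_invertibility}) produces, for every $1 \le l \le k-1$,
\begin{align}
\textsf{P}\{s_{n-k+1}(M) \le \tfrac{\varepsilon}{\sqrt{n}}\} \le \binom{n}{k}\binom{n}{l}\,\textsf{P}\{\sum_{m=1}^{l} \Vert P_{H^\perp}\boldsymbol{c}_{i_m}\Vert_2^2 \le (\tfrac{C_1 k\varepsilon}{k-l})^2\},\nonumber
\end{align}
where $H = \textnormal{span}\{\boldsymbol{c}_{i_{l+1}}, \dots, \boldsymbol{c}_{i_n}\}$ has codimension $l$ and is independent of $\boldsymbol{c}_{i_1}, \dots, \boldsymbol{c}_{i_l}$. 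Next, I would apply Lemma \ref{Lem_prob_LCD_1} to the $n \times (n-l)$ matrix of spanning columns, with codimension parameter $l \le k \le n^\alpha \le cn$, to obtain the good event
\begin{align}
\Omega := \{D^{(1)}_{c_1\sqrt{n},\,c_2}(H^\perp) > C\sqrt{n}\,e^{c_3 n/l}\},\quad \textsf{P}(\Omega^c) \le e^{-c_4 n}.\nonumber
\end{align}

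Conditioning on any $\omega \in \Omega$ and invoking Lemma \ref{Lem_distance_to_general_subspace_random_rounding} with $\alpha = c_1\sqrt{n}$, $\gamma = c_2$, $v = 0$, and subspace $H(\omega)$ of codimension $l$, the admissibility threshold $\sqrt{n}/D^{(1)}_{c_1\sqrt{n},c_2}(H^\perp(\omega))$ is forced below $C^{-1}e^{-c_3 n/l}$, so for every $t$ above that threshold
\begin{align}
\textsf{P}\{\Vert P_{H^\perp(\omega)}\boldsymbol{c}_{i_m}\Vert_2 \le t\sqrt{l}\} \le (C_5 t)^l + C_6^l e^{-c_7 n},\nonumber
\end{align}
and for $t$ below this threshold the same bound is inherited trivially by monotonicity in $t$. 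Since $l \le n^\alpha$ with $\alpha < 1$, the factor $C_6^l$ gets absorbed into the exponential. By the mutual independence of $\Vert P_{H^\perp(\omega)}\boldsymbol{c}_{i_1}\Vert_2, \dots, \Vert P_{H^\perp(\omega)}\boldsymbol{c}_{i_l}\Vert_2$ for fixed $\omega$, I would then run the Markov-plus-Laplace-transform tensorization used after display (3.5) in the proof of Theorem \ref{Theo_Main2} (the computation of $\textsf{E}\exp(-\Vert Pc\Vert^2/t^2)$ via the Gamma-function identity) to upgrade the one-column bound to
\begin{align}
\textsf{P}\{\sum_{m=1}^{l} \Vert P_{H^\perp(\omega)}\boldsymbol{c}_{i_m}\Vert_2^2 \le l^2 t^2\} \le (C_8 t)^{l^2} + e^{-c_9 n}.\nonumber
\end{align}
Plugging in $t = C_1 k\varepsilon/[l(k-l)]$, integrating over $\omega$, multiplying by $\binom{n}{k}\binom{n}{l} \le n^{k+l}$, and choosing $l = \sqrt{\gamma}\,k$ (so that $l^2 = \gamma k^2$ and $l < k$) reproduces the announced bound $(C\varepsilon/k)^{\gamma k^2} + e^{-cn}$.

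The most delicate point is the bookkeeping of the admissibility threshold $\sqrt{n}/D^{(1)}$ for very small $\varepsilon$: when $\varepsilon$ is exponentially small in $n/l$, the value $t = C_1 k\varepsilon/[l(k-l)]$ falls below the threshold and the distance lemma does not apply directly, so one must argue by monotonicity of $t \mapsto \textsf{P}\{\Vert Pc\Vert \le t\sqrt{l}\}$ that the pure exponential term $e^{-c_7 n}$ already dominates in that regime. The other routine-but-essential bookkeeping is that $n^{k+l} e^{-c_9 n}$ remains of order $e^{-cn}$ and that $n^{k+l}$ can be absorbed into the main-term constant in $(C\varepsilon/k)^{\gamma k^2}$; this is exactly where both $k \ge \log n$ (which guarantees $n^{1/k} = O(1)$, so that $n^{k+l}$ is a bounded power of the base of the main term) and $k \le n^\alpha$ with $\alpha < 1$ (so that $n^{k+l} = o(e^{cn})$) come into play, in direct parallel with the closing lines of the proof of Theorem \ref{Theo_Main2}.
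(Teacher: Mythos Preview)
Your proposal is correct and matches the paper's proof essentially step for step: the same restricted-invertibility reduction \eqref{Eq_Proof2_trans}, the same LCD good event $\Omega$ via Lemma \ref{Lem_prob_LCD_1}, the conditional distance bound from Lemma \ref{Lem_distance_to_general_subspace_random_rounding} with the monotonicity patch for sub-threshold $\varepsilon$, the same Laplace-transform tensorization, and the same choice $l=\sqrt{\gamma}\,k$ with the $k\ge \log n$, $k\le n^\alpha$ bookkeeping at the end. The only cosmetic difference is that the paper opens with a short reduction to large $n$ using Lemma \ref{Lem_smallest_singular_heavy_tailed_entries}, and records the tensorized error as $e^{-c nl}$ before it is swamped by $\textsf{P}(\Omega^c)\le e^{-cn}$.
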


\begin{myrem}\label{Rem_Mainresult1_remark}
	(i) Without assuming $\mathcal{L}(\xi, a)<b$,  we cannot have a big $s_{n-k+1}(M)$. In particular, if $\xi$ does not satisfy this condition, all the entries of $M$ will take one value with very large probability, in which case $s_{n-k+1}(M)$ would be $0$.
	
	(ii) The condition \eqref{condition_in_Theo1} is very general. There are many cases satisfying this condition. For example, if  $\textsf{E}\xi^{2}<\infty$, then $M$ satisfies \eqref{condition_in_Theo1} (see Lemma \ref{Lem_moment_to_levy_concentration} below for details). We also refer readers to Example 1.4 in \cite{Livshys_JAM} for an interesting case where $M$ satisfies \eqref{condition_in_Theo1} but $\textsf{E}\xi^{2}=\infty$.
\end{myrem}

\begin{proof}[Proof of Theorem \ref{Theo_Main1}]
	
	Without loss of generality, we only prove Theorem \ref{Theo_Main1} when $n$ is large enough. In particular, we  assume Theorem \ref{Theo_Main1} is true when $n\ge n_{0}$. Note that 
	\begin{align}
		\textsf{P}\{s_{n-k+1}(M)\le \frac{\varepsilon}{\sqrt{n}}   \}\le \textsf{P}\{s_{n}(M)\le \frac{\varepsilon}{\sqrt{n}}   \}\le C\varepsilon+e^{-cn},\nonumber
	\end{align}
	where the last inequality is due to Lemma \ref{Lem_smallest_singular_heavy_tailed_entries}.
	Hence, when $n<n_{0}$, we have for $\varepsilon\le e^{-cn_{0}}/C$
	\begin{align}
		\textsf{P}\{s_{n-k+1}(M)\le \frac{\varepsilon}{\sqrt{n}}   \}\le 2e^{-cn}.\nonumber
	\end{align}
	As for the case $n<n_{0}$ and $\varepsilon> e^{-cn_{0}}/C$, the desired bound is trivial.
	
	
Following the same line as in the proof of Theorem \ref{Theo_Main1}, we have	for $\varepsilon>0$ and $1\le l\le k-1$
\begin{align}
	\textsf{P}\{s_{n-k+1}(M)\le \frac{\varepsilon}{\sqrt{n}}  \}\le \binom{n}{k}\binom{n}{l}\textsf{P}\Big\{ \sum_{m=1}^{l}\text{dist}(\boldsymbol{c}_{i_{m}}, H)^{2}\le \Big(\frac{C_{1}k\varepsilon}{k-l}\Big)^{2}  \Big\}.\nonumber
\end{align}
	Here $\boldsymbol{c}_{i_{m}}=\textnormal{Col}_{i_{m}}(M)$ and $H=\textnormal{span}\big\{\boldsymbol{c}_{i_{l+1}}, \cdots, \boldsymbol{c}_{i_{n}}\big\}$.
	Consider the event
	\begin{align}
		\Omega:=\{ D_{c_{1}\sqrt{n}, c_{2}}^{(1)}(H^\perp)>C\sqrt{n}e^{\frac{c_{3}n}{l}}  \}.\nonumber
	\end{align}
	Let us condition on a realization of $H$ in $\Omega$. Lemma \ref{Lem_distance_to_general_subspace_random_rounding}  yields for $\varepsilon>\exp(-\frac{c_{3}n}{l})/C$
	\begin{align}
		\textsf{P}\{\text{dist}(\boldsymbol{c}_{i_1}, H(\omega))\le \varepsilon\sqrt{l}   \}\le (C_{2}\varepsilon)^{l}+C_{3}^{l}e^{-c_{4}n}.\nonumber
	\end{align}
	As for the case $\varepsilon\le \exp(-\frac{c_{3}n}{l})/C$, we have 
	\begin{align}
		\textsf{P}\{\text{dist}(\boldsymbol{c}_{i_1}, H(\omega))\le \varepsilon\sqrt{l}\}  &\le 
		\textsf{P}\{\text{dist}(\boldsymbol{c}_{i_1}, H(\omega))\le\frac{\sqrt{l}}{C}e^{-\frac{c_{3}n}{l}} \}\nonumber\\
		&  \le (\frac{C_{2}}{C})^{l}e^{-c_{3}n}+C_{3}^{l}e^{-c_{4}n}.\nonumber
	\end{align}
	Note that $l\le k-1<n^{\alpha}, 0<\alpha<1$. Hence, when $n$ is sufficiently large enough, we have for $\varepsilon>0$
	\begin{align}\label{Eq_Proof1_distance}
		\textsf{P}\{\text{dist}(\boldsymbol{c}_{i_1}, H(\omega))\le \varepsilon\sqrt{l}   \}\le (C_{2}\varepsilon)^{l}+e^{-c_{5}n}.
	\end{align} 
	
	Due to the independence of random variables $\text{dist}(\boldsymbol{c}_{i_{1}}, H(\omega)), \cdots, \text{dist}(\boldsymbol{c}_{i_{l}}, H(\omega))$, a tensorization (as in the proof of Theorem \ref{Theo_Main2}) yields for $t> 0$
	\begin{align}
		\textsf{P}\Big\{ \sum_{m=1}^{l}\text{dist}(\boldsymbol{c}_{i_{m}}, H(\omega))^{2}\le l^{2}t^2  \Big\}\le (C_{3}t)^{l^2}+e^{-c_{6}nl}.\nonumber
	\end{align}
	
	Lemma \ref{Lem_prob_LCD_1} yields that $\textsf{P}\{\Omega^{c}  \}\le e^{-c_{7}n}$. Hence, we have for $\varepsilon>0$
	\begin{align}
		&\textsf{P}\Big\{ \sum_{m=1}^{l}\text{dist}(\boldsymbol{c}_{i_{m}}, H)^{2}\le \Big(\frac{C_{1}k\varepsilon}{k-l}\Big)^{2}  \Big\}\nonumber\\
		\le& \textsf{P}\Big\{ \sum_{m=1}^{l}\text{dist}(\boldsymbol{c}_{i_{m}}, H)^{2}\le \Big(\frac{C_{1}k\varepsilon}{k-l}\Big)^{2}, \Omega  \Big\}+\textsf{P}\{\Omega^{c} \}\nonumber\\
		\le & \Big(\frac{C_{4}k\varepsilon}{l(k-l)}  \Big)^{l^{2}}+e^{-c_{8}n}.\nonumber
	\end{align}
	
	Hence, taking the union bound we have for $\varepsilon>0$
	\begin{align}\label{Eq_proof_main_trans}
		\textsf{P}\{s_{n-k+1}(M)\le \frac{\varepsilon}{\sqrt{n}}  \}\le& \binom{n}{k}\binom{n}{l}\textsf{P}\Big\{ \sum_{m=1}^{l}\text{dist}(\boldsymbol{c}_{i_{m}}, H)^{2}\le \Big(\frac{C_{1}k\varepsilon}{k-l}\Big)^{2}  \Big\}\nonumber\\
		\le &n^{k+l}\Big( \big(\frac{C_{4}k\varepsilon}{l(k-l)}  \big)^{l^{2}}+e^{-c_{8}n}  \Big).
	\end{align}
	Letting $l=\sqrt{\gamma}k, 0<\gamma<1$ ($l\le k-1$ when $n$ is large enough), we have by adjusting the absolute constant
	\begin{align}
		\textsf{P}\{s_{n-k+1}(M)\le \frac{\varepsilon}{\sqrt{n}}  \}\le \Big( \frac{C_{5}(\gamma)\varepsilon}{k}\Big)^{\gamma k^{2}}+e^{-c_{9}n},\nonumber
	\end{align}
	which concludes the proof.
	
\end{proof}

\subsection{Bounded density case}

\begin{mytheo}\label{Theo_main3}
	Let $M=(\xi_{ij})_{n\times n}$ be a random matrix with i.i.d. entries and  assume further that the density functions of $\xi_{ij}$ are bounded by $K_{1}$. Then for any fixed $\gamma\in(0, 1)$, when $n$ is large enough, we have for $\varepsilon>0$ and $\log n\le  k\le  n$
	\begin{align}
		\textsf{P}\{s_{n-k+1}(M)\le \frac{\varepsilon}{\sqrt{n}}  \}\le \big(\frac{CK_{1}\varepsilon}{k}  \big)^{\gamma k^{2}},\nonumber
	\end{align}
	where $C$ is a positive constant depending only on the parameters $K_{1}$.
\end{mytheo}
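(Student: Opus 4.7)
The plan is to follow the Courant--Fischer / restricted-invertibility reduction used in the proofs of Theorems \ref{Theo_Main2} and \ref{Theo_Main1}, and then replace all of the LCD and compressible/incompressible machinery by a single application of the density bound, Lemma \ref{Lem_smallballprobability_bounded_density}. Because the density estimate is deterministic and requires no exceptional event, no subtracted probability of size $e^{-cn}$ appears in the final bound; this is exactly what allows the additive exponential in Theorem \ref{Theo_Main1} to disappear and the range of $k$ to extend all the way to $n$.

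\textbf{Step 1 (deterministic reduction).} The Courant--Fischer step, the extraction of the well-conditioned minor via Lemma \ref{Lem_Naor_restricted_invertibility}, and the passage to projected columns use only the min--max principle and the restricted-invertibility result, none of which depends on the distribution. Repeating this part of the proof of Theorem \ref{Theo_Main2} yields, for every $1\le l\le k-1$ and every $\varepsilon>0$,
\begin{align}
\textsf{P}\bigl\{s_{n-k+1}(M)\le \tfrac{\varepsilon}{\sqrt{n}}\bigr\}\le \binom{n}{k}\binom{n}{l}\,\textsf{P}\Bigl\{\sum_{m=1}^{l}\Vert P_{H^{\perp}}\boldsymbol{c}_{i_{m}}\Vert_{2}^{2}\le \Bigl(\tfrac{C_{1}k\varepsilon}{k-l}\Bigr)^{2}\Bigr\},\nonumber
\end{align}
where $i_{1},\dots,i_{l}$ are fixed indices in $[n]$ and $H=\textnormal{span}\{\boldsymbol{c}_{j}:j\notin\{i_{1},\dots,i_{l}\}\}$ is independent of $\boldsymbol{c}_{i_{1}},\dots,\boldsymbol{c}_{i_{l}}$. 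Under the bounded-density hypothesis, $H$ has dimension $n-l$ almost surely, so $H^{\perp}$ has dimension $l$.

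\textbf{Step 2 (clean small-ball via bounded density).} Conditioning on any realization of $H$, Lemma \ref{Lem_smallballprobability_bounded_density} states that the density of the $l$-dimensional vector $P_{H^{\perp}}\boldsymbol{c}_{i_{m}}$ is bounded by $(CK_{1})^{l}$ almost everywhere. Multiplying this bound by the volume of the Euclidean ball of radius $\varepsilon\sqrt{l}$ in $\mathbb{R}^{l}$ and applying Stirling gives
\begin{align}
\textsf{P}\bigl\{\Vert P_{H^{\perp}}\boldsymbol{c}_{i_{m}}\Vert_{2}\le \varepsilon\sqrt{l}\bigr\}\le (C'K_{1}\varepsilon)^{l}\quad\text{for every }\varepsilon>0.\nonumber
\end{align}
In contrast with \eqref{Eq_Proof1_distance}, there is no additive exponential term on the right: this is the decisive simplification.

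\textbf{Step 3 (tensorization).} Conditionally on $H$, the quantities $\Vert P_{H^{\perp}}\boldsymbol{c}_{i_{m}}\Vert_{2}$, $m=1,\dots,l$, are independent. I would feed the bound of Step~2 into the same $\textsf{E}\exp(-\Vert\cdot\Vert_{2}^{2}/t^{2})$ Laplace-transform computation used inside the proof of Theorem \ref{Theo_Main2} to obtain
\begin{align}
\textsf{P}\Bigl\{\sum_{m=1}^{l}\Vert P_{H^{\perp}}\boldsymbol{c}_{i_{m}}\Vert_{2}^{2}\le l^{2}t^{2}\Bigr\}\le (C''K_{1}t)^{l^{2}}\quad\text{for every }t>0.\nonumber
\end{align}

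\textbf{Step 4 (optimize and absorb the union bound).} Taking $t=C_{1}k\varepsilon/\bigl(l(k-l)\bigr)$ in Step~3 and combining with Step~1 yields
\begin{align}
\textsf{P}\bigl\{s_{n-k+1}(M)\le \tfrac{\varepsilon}{\sqrt{n}}\bigr\}\le n^{k+l}\Bigl(\tfrac{C_{2}K_{1}k\varepsilon}{l(k-l)}\Bigr)^{l^{2}}.\nonumber
\end{align}
Setting $l=\sqrt{\gamma}\,k$ (so $l^{2}=\gamma k^{2}$ and $l(k-l)\ge \sqrt{\gamma}(1-\sqrt{\gamma})k^{2}$) converts the right-hand side into $n^{2k}\bigl(C_{3}(\gamma)K_{1}\varepsilon/k\bigr)^{\gamma k^{2}}$, and the hypothesis $k\ge\log n$ gives $n^{2k}\le e^{2k^{2}}\le D(\gamma)^{\gamma k^{2}}$, which is absorbed by inflating the constant in front.

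\textbf{Main obstacle.} Analytically there is no serious difficulty: all of the heavy tools used in Theorems \ref{Theo_Main2} and \ref{Theo_Main1} (LCD, compressible/incompressible decomposition, almost-orthogonal systems, Livshyts's random rounding) collapse to the single line of Step~2 under the bounded-density hypothesis. The only mildly delicate point is the bookkeeping in Step~4: one must check that the combinatorial factor $\binom{n}{k}\binom{n}{l}\le n^{2k}$ can be absorbed into $(C/k)^{\gamma k^{2}}$ by inflating the constant, and this is exactly where the assumption $k\ge\log n$ enters. The same calculation explains why the range of $k$ can be pushed to $n$: no event of size $e^{-cn}$ has to survive the union bound anymore.
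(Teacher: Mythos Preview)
Your proposal is correct and follows essentially the same route as the paper: the same restricted-invertibility reduction (Step~1), the same use of Lemma~\ref{Lem_smallballprobability_bounded_density} to get the clean small-ball estimate $(CK_{1}\varepsilon)^{l}$ with no additive term (Step~2), the same tensorization (Step~3), and the same choice $l=\sqrt{\gamma}\,k$ together with $k\ge\log n$ to absorb the combinatorial factor (Step~4). Your commentary on why the exponential correction disappears and why the range of $k$ extends to $n$ is accurate and matches the paper's reasoning.
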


To obtain the local Marchenko-Pastur law at the hard edge of sample covariance matrices, Cacciapuoti, Maltsrv, and Schlein \cite{Cacciapuoti_JNP} also estimated the number of the singular values close to zero. In particular, let $I$ be the interval $[0, k\varepsilon/\sqrt{n}]$. Denote by $N_{I}$ the number of singular values of $M$ in $I$. In the same setting of Theorem \ref{Theo_main3}, they showed that 
\begin{align}\label{Eq_intro_}
	\textsf{P}\{N_{I}\ge k  \}=O(\varepsilon^{ck}).
\end{align}
Compared with this result, Theorem \ref{Theo_main3} yields that the propability of $\{N_{I}\ge k  \}$ is $O(\varepsilon^{ck^{2}})$, which is much better than the estimate in \eqref{Eq_intro_}.

\begin{proof}[Proof of Theorem \ref{Theo_main3}]
	By virtue of \eqref{Eq_proof_main_trans}, we have
	\begin{align}\label{Eq_proof_main3}
		\textsf{P}\big\{  s_{n-k+1}(M)\le \frac{\varepsilon}{\sqrt{n}}    \big\}\le n^{k+l}\textsf{P}\Big\{\sum_{m=1}^{l}\textnormal{dist}(\boldsymbol{c}_{i_{m}}, H)^{2}\le \big( \frac{C_{1}k\varepsilon}{k-l} \big)^{2}  \Big\},
	\end{align}
	where $\boldsymbol{c}_{i}$ is the $i$-th column of $M$, $H=\textnormal{span}\{\boldsymbol{c}_{i_{l+1}}, \cdots, \boldsymbol{c}_{i_{n}}\}$ and $1\le l\le k-1$.

	Condition on a realization of $H$, then Lemma \ref{Lem_smallballprobability_bounded_density} yields that
	\begin{align}
		\textsf{P}\{\textnormal{dist}(\boldsymbol{c}_{i_{1}}, H(\omega))\le \varepsilon\sqrt{l}  \}&= \int_{\{\textnormal{dist}(\boldsymbol{c}_{i_{1}}, H(\omega))\le \varepsilon\sqrt{l}  \}}1\,d\textsf{P}\nonumber\\
		&\le (C_{2}K_{1})^{l}\cdot \frac{\pi^{l/2}}{\Gamma(l/2+1)}(\varepsilon\sqrt{l})^{l}\le (C_{3}K_{1}\varepsilon)^{l}.\nonumber
	\end{align}
	Note that $\{\textnormal{dist}(\boldsymbol{c}_{i_{1}}, H(\omega)), \cdots, \textnormal{dist}(\boldsymbol{c}_{i_{l}}, H(\omega))\}$ is a sequence of independent random variables. Hence, following the tensorization argument in the proof of Theorem \ref{Theo_Main2}, we have for $t\ge 0$
	\begin{align}
		\textsf{P}\Big\{\sum_{m=1}^{l}\textnormal{dist}(\boldsymbol{c}_{i_{m}}, H(\omega))^{2}\le l^{2}t^{2}   \Big\}\le (C_{4}K_{1}t)^{l^{2}}.\nonumber
	\end{align}
	
	Note that $\boldsymbol{c}_{i_{1}}, \cdots, \boldsymbol{c}_{i_{l}}$ are independent of $H$. Hence, we have for $t\ge 0$
	\begin{align}
		\textsf{P}\Big\{\sum_{m=1}^{l}\textnormal{dist}(\boldsymbol{c}_{i_{m}}, H)^{2}\le l^{2}t^{2}   \Big\}\le (C_{4}K_{1}t)^{l^{2}}.\nonumber
	\end{align}
	Hence, \eqref{Eq_proof_main3} yields that
	\begin{align}
		\textsf{P}\big\{s_{n-k+1}(M)\le \frac{\varepsilon}{\sqrt{n}}  \big\}\le n^{k+l}\big( \frac{C_{5}K_{1}k\varepsilon}{l(k-l)}  \big)^{l^{2}}.\nonumber
	\end{align}
	Taking $l=\sqrt{\gamma}k, 0<\gamma<1 $, we have when $n$ is large enough
	\begin{align}
		\textsf{P}\big\{ s_{n-k+1}(M)\le \frac{\varepsilon}{\sqrt{n}}   \big\}\le \big(\frac{C_{6}(\gamma)K_{1}\varepsilon}{k}  \big)^{\gamma k^{2}},\nonumber
	\end{align}
	which concludes the proof.
	
\end{proof}

At the end of this section, we shall illustrate the relationship between the three types of random matrices discussed in this paper. 

Let $M=(\xi_{ij})$ be an $n\times n$ random matrix and assume that $\xi_{ij}\stackrel{i.i.d.}{\sim}\xi$. 
Define the following set of random matrices:
\begin{align}
	&A:=\{M: M \,\,\text{satisfies the condition \eqref{condition_in_Theo1}}  \},\nonumber\\
	&B:=\{M: \xi \,\,\text{is a centered non-constant subgaussian random variable} \},\nonumber\\
	&C:=\{M: \xi \,\,\text{has the bounded density function}  \}.\nonumber
\end{align}
These sets have the following relationship:
\begin{figure}[h]
	\centering
	\includegraphics[width=6cm]{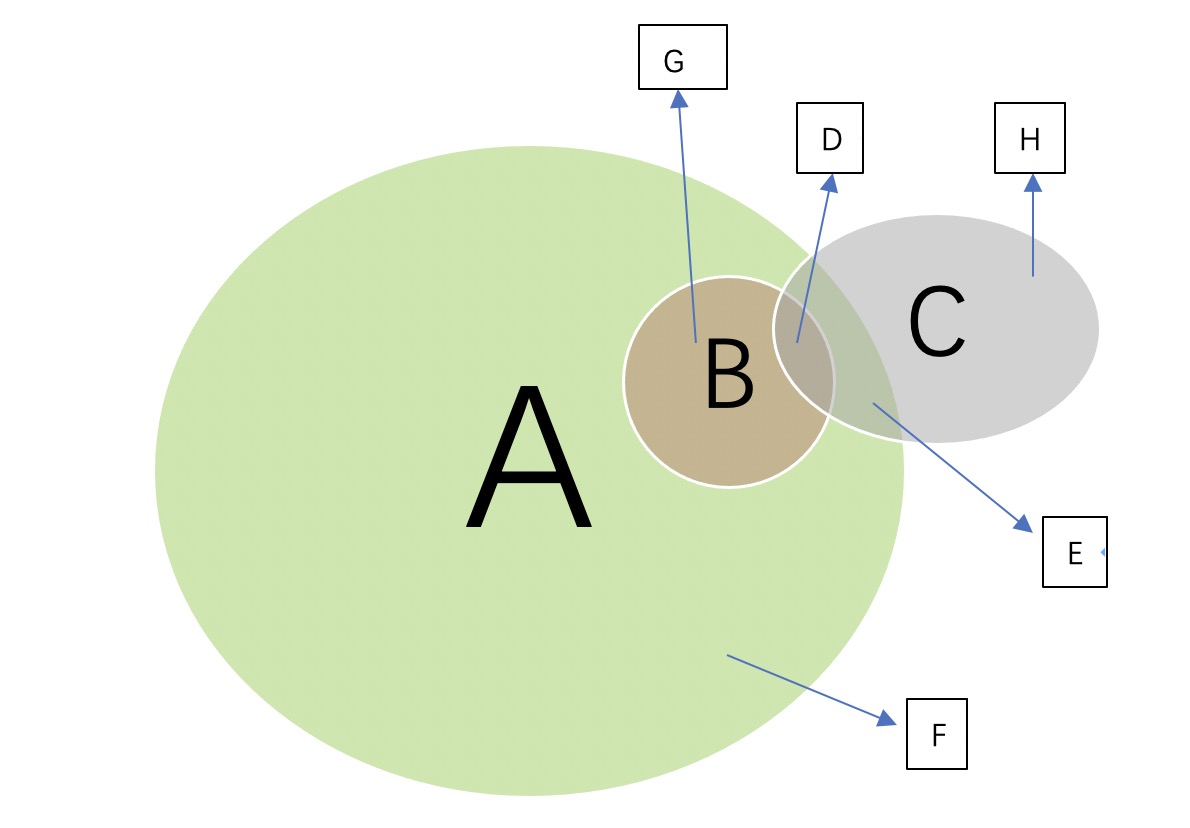}
	\label{fig:example}
\end{figure}

For the sake of reading, we give specific examples  in each set.
\begin{itemize}
	\item $M\in D:=A\cap B\cap C$, if $\xi$ is a centered gaussian random variable;
	\item $M\in E:=(A\cap C)-B$, if $\xi$ is an exponential random variable;
	\item $M\in F:=A-B-C$, if $\xi$ is a geometric random variable;
	\item $M\in G:=B-C$, if $\xi$ is a Rademacher random variable;
	\item $M\in H:=C-A$, if $\xi$ is a Cauchy random variable.
\end{itemize}

\section{Appendix}
In this section, we mainly show that if a random variable $\xi$ satisfies $\textsf{E}\xi^{2}<\infty$, then $\xi$ has a bounded L\'{e}vy concentration function.
\begin{mylem}\label{Lem_moment_to_levy_concentration}
	Let $\xi$ be a random variable such that $\textsf{E}\xi^{2}\le K_{2}$. There exist constants $a>0, b\in (0, 1)$ such that
	\begin{align}
		\mathcal{L}(\xi, a)<b.\nonumber
	\end{align}
\end{mylem}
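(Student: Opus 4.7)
The plan is to prove that $\lim_{a\downarrow 0}\mathcal{L}(\xi,a)$ equals $L_{0} := \sup_{y\in\mathbb{R}}\textsf{P}\{\xi=y\}$, the mass of the largest atom of $\xi$. Since the conclusion $\mathcal{L}(\xi,a)<b<1$ visibly fails for a deterministic $\xi$, the lemma must be read under the tacit assumption that $\xi$ is non-constant; under that reading $L_{0}<1$, so taking any $b\in(L_{0},1)$ together with sufficiently small $a$ gives the conclusion. Note that the hypothesis $\textsf{E}\xi^{2}\le K_{2}$ enters only qualitatively, to provide tightness of $\xi$ via Markov's inequality $\textsf{P}\{|\xi|>R\}\le K_{2}/R^{2}$; indeed any probability measure on $\mathbb{R}$ would suffice.

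The easy direction $\mathcal{L}(\xi,a)\ge L_{0}$ is immediate from $\textsf{P}\{|\xi-y_{0}|\le a\}\ge \textsf{P}\{\xi=y_{0}\}$ for every atom $y_{0}$ and then taking the sup over $y_{0}$. For the reverse inequality $\limsup_{a\downarrow 0}\mathcal{L}(\xi,a)\le L_{0}$, I would argue by a compactness scheme. Take any sequence $a_{n}\downarrow 0$ and, for each $n$, a near-maximizer $y_{n}$ with $\textsf{P}\{|\xi-y_{n}|\le a_{n}\}\ge \mathcal{L}(\xi,a_{n})-1/n$; pass to a subsequence so that $\textsf{P}\{|\xi-y_{n}|\le a_{n}\}\to L:=\limsup_{a\downarrow 0}\mathcal{L}(\xi,a)$. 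The case $L=0$ is trivial; otherwise Markov's inequality forces $\textsf{P}\{|\xi-y_{n}|\le a_{n}\}\to 0$ whenever $|y_{n}|\to\infty$, so the $y_{n}$ must remain bounded. By Bolzano--Weierstrass, extract a further subsequence $y_{n_{k}}\to y^{\ast}$; for any fixed $\varepsilon>0$ and all large $k$, the inclusion $\{|\xi-y_{n_{k}}|\le a_{n_{k}}\}\subset \{|\xi-y^{\ast}|\le 2\varepsilon\}$ holds, so $L\le \textsf{P}\{|\xi-y^{\ast}|\le 2\varepsilon\}$, and sending $\varepsilon\downarrow 0$ yields $L\le \textsf{P}\{\xi=y^{\ast}\}\le L_{0}$.

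The main obstacle is precisely this compactness step: the supremum over $y\in\mathbb{R}$ in the definition of $\mathcal{L}(\xi,a)$ need not be attained, so a pointwise argument cannot work; one must combine tightness (supplied by the second-moment hypothesis) with the extraction of a limit point in order to force the near-maximizing $y_{n}$ to accumulate at a genuine atom of $\xi$. Once this is established, the lemma follows immediately by observing that non-constancy of $\xi$ forbids $L_{0}=1$, so any $b\in(L_{0},1)$ and any sufficiently small $a>0$ witness the claim.
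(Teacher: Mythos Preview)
Your argument is correct and follows a genuinely different route from the paper's. The paper proceeds by an explicit moment computation: after centring and rescaling so that $\textsf{E}|\xi|^{3/2}=1$, it truncates $\xi$ at a level $m$, splits according to whether the putative centre $z$ satisfies $|z|\le 3/16$ or $|z|>3/16$, and in each case uses the moment normalisations (the $3/2$-moment in the first case, the vanishing mean in the second) to bound $\textsf{P}\{|\xi-z|\le 1/16\}$ by an explicit function of $m$ and $K_{2}$; optimising over $m$ then yields concrete values of $a$ and $b$. Your approach is softer and more conceptual: identifying $\lim_{a\downarrow 0}\mathcal{L}(\xi,a)$ with the mass $L_{0}$ of the heaviest atom via a compactness argument is cleaner and, as you point out, works for any non-constant probability law on $\mathbb{R}$ without any moment hypothesis. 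The trade-off is that your constants $a,b$ are purely existential and depend on the full law of $\xi$ through $L_{0}$, whereas the paper's computation is aimed at producing bounds depending only on a couple of moments; you are also right that non-constancy of $\xi$ must be assumed in both proofs---in the paper's case it is hidden in the normalisation $\textsf{E}|\xi|^{3/2}=1$.
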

\begin{proof}
	Without loss of generality, assume that $\textsf{E}\xi=0$ and $\textsf{E}\vert\xi\vert^{3/2}=1$. Given a constant $m>0$, we make the following decomposition:
	\begin{align}
		\xi=\xi\cdot\mathbb{I}_{\{\vert\xi\vert\le m\}}+\xi\cdot\mathbb{I}_{\{\vert\xi\vert> m\}}=:\xi_{1}+\xi_{2},\nonumber
	\end{align}
where $\mathbb{I}_{A}$ is an indicator function of the set $A$.
Then we have for any $z\in \mathbb{R}$
	\begin{align}\label{Eq_appendix_decom}
		\textsf{P}\{\vert\xi-z\vert\le 1/16  \}\le \textsf{P}\{\vert\xi_{1}-z\vert\le 1/16 \}+\textsf{P}\{ \vert\xi\vert> m  \}.
	\end{align}
For the second addition on the right side of \eqref{Eq_appendix_decom}, we have by Markov's inequality
\begin{align}
    \textsf{P}\{ \vert\xi\vert> m  \}\le \frac{\textsf{E}\vert\xi\vert^{2}}{m^{2}}\le \frac{K_{2}}{m^{2}},\nonumber
\end{align}

Next we turn to bound the other addition term of \eqref{Eq_appendix_decom}. Denote
\begin{align}
	p=\textsf{P}\{\vert\xi_{1}-z\vert\le 1/16  \}.\nonumber
\end{align}
For the case $\vert z\vert\le 3/16$, we have 
\begin{align}\label{Eq_appendix_case_z_1}
	\textsf{E}\vert\xi_{1}\vert^{3/2}&\le \textsf{P}\{\vert\xi_{1}-z\vert\le 1/16  \}\cdot(\frac{1}{4})^{3/2}+\textsf{P}\{\vert\xi_{1}-z\vert>1/16  \}m^{3/2}\nonumber\\
	&=\frac{p}{8}+m^{3/2}(1-p).
\end{align}
Note that
\begin{align}
	\textsf{E}\vert\xi\vert^{3/2}&=\textsf{E}\big(\vert\xi\vert^{3/2}(\mathbb{I}_{\vert\xi\vert\le m}+\mathbb{I}_{\vert\xi\vert> m})\big)=\textsf{E}\vert\xi_{1}\vert^{3/2}+\textsf{E}\vert\xi_{2}\vert^{3/2}\nonumber\\
	&=\textsf{E}\vert\xi_{1}\vert^{3/2}+\int_{0}^{m^{3/2}}\textsf{P}\{\vert\xi_{2}\vert^{3/2} >t \}\, dt+\int_{m^{3/2}}^{\infty}\textsf{P}\{\vert\xi_{2}\vert^{3/2} >t \}\, dt\nonumber\\
	&\le \textsf{E}\vert\xi_{1}\vert^{3/2}+m^{3/2}\textsf{P}\{\vert\xi_{2}\vert^{3/2}>m^{3/2} \}+K_{2}\int_{m^{3/2}}^{\infty}t^{-4/3}\, dt\nonumber\\
	&\le \textsf{E}\vert\xi_{1}\vert^{3/2}+\frac{K_{2}}{\sqrt{m}}+\frac{3K_{2}}{\sqrt{m}},\nonumber
\end{align}
where the first inequality is due to the fact $\{\vert\xi_{2}\vert>0 \}=\{\vert\xi_{2}\vert>m \}$ and  Markov's inequality.
Hence, $\textsf{E}\vert\xi_{1}\vert^{3/2}\ge 1-4K_{2}/\sqrt{m}$.  By virtue of \eqref{Eq_appendix_case_z_1}, we have for $\vert z\vert\le 3/16$
\begin{align}
	p\le \frac{m^{2}+4K_{2}-\sqrt{m}}{m^{2}-\sqrt{m}/8}=:f_{1}(m).\nonumber
\end{align}
For the case $\vert z\vert>3/16$, we have 
\begin{align}\label{Eq_appendix_case2}
	\vert\textsf{E}\xi_{1}\vert\ge \textsf{E}\xi_{1}&\ge \textsf{P}\{\vert\xi_{1}-z\vert\le 1/16  \}(z-1/16)+\textsf{P}\{\vert\xi_{1}-z\vert>1/16  \}(-m)\nonumber\\
	&\ge p/8-(1-p)m.
\end{align}
Note that 
\begin{align}
	\vert \textsf{E}\xi_{1}\vert\le &=\vert \textsf{E}\xi_{2}\vert\le \int_{0}^{\infty}\textsf{P}\{\vert\xi_{2}\vert>t  \}\, dt\nonumber\\
	&\le m\textsf{P}\{ \vert\xi\vert>m  \}+\int_{m}^{\infty}\textsf{P}\{\vert\xi\vert>t  \}\, dt\le \frac{2K_{2}}{m}.\nonumber
\end{align}
Hence, by virtue of \eqref{Eq_appendix_case2}, we have for $\vert z\vert>3/16$
\begin{align}
	p\le \frac{m^{2}+2K_{2}}{m^{2}+m/8}=:f_{2}(m).\nonumber
\end{align}
Then, for any $z\in\mathbb{R}$, we have by virtue of \eqref{Eq_appendix_decom}
\begin{align}
	\mathcal{L}(\xi, 1/16)\le \max\big(f_{1}(m), f_{2}(m)\big)+\frac{K_{2}}{m^{2}}.\nonumber
\end{align}
Note that
\begin{align}
	f_{1}(m)=1-\frac{7\sqrt{m}/8-4K_{2}}{m^{2}-\sqrt{m}/8}\le 1-\frac{7\sqrt{m}/8-4K_{2}}{m^{2}}.\nonumber
\end{align}
and 
\begin{align}
	f_{2}(m)=1-\frac{m/8-2K_{2}}{m^{2}+m/8}\le 1-\frac{m/8-2K_{2}}{m^{2}}.\nonumber
\end{align}
When $m>49$, we have $f_{1}(m)>f_{2}(m)$. Hence, for $m>49$
\begin{align}
	\mathcal{L}(\xi, 16)\le f_{1}(m)+\frac{K_{2}}{m^{2}}\le 1-\frac{7\sqrt{m}/8-5K_{2}}{m^{2}}.\nonumber
\end{align}
We conclude the proof by letting
\begin{align}
	m=\max\big(49, (\frac{40K_{2}}{7})^{2}+1   \big).\nonumber
\end{align}

\end{proof}

\textbf{Acknowledgment:} Su was partly supported by the National Natural Science Foundation of China  (No. 12271475 and U23A2064). Wang was partly supported by Shandong Provincial Natural Science Foundation (No. ZR2024MA082) and the National Natural Science Foundation of China (No. 12071257).


\end{document}